\newtheorem{rem}{Remark}
\newcommand{\qed}{\hfill \mbox{\raggedright $\Box$}}
\newcommand{\Th}{\mathacal{\Theta}}
\newcommand{\Oh}{\mathcal{O}}
\DeclareSymbolFont{AMSb}{U}{msb}{m}{n}
\newcommand{\Z}{{\mathbb{Z}}}
\newcommand{\R}{{\mathbb{R}}}
\renewcommand{\leq}{\leqslant}
\renewcommand{\geq}{\geqslant}
\newtheorem{theorem}{Theorem}
\newtheorem{lemma}[theorem]{Lemma}
\newtheorem{claim}[theorem]{Claim}
\newenvironment{proof}{\noindent{\bf Proof}\hspace*{1em}}{\qed\bigskip}
\newenvironment{proof-sketch}{\noindent{\bf Sketch of Proof}\hspace*{1em}}{\qed\bigskip}
\newenvironment{proof-idea}{\noindent{\bf Proof Idea}\hspace*{1em}}{\qed\bigskip}
\newenvironment{proof-of-lemma}[1]{\noindent{\bf Proof of Lemma #1}\hspace*{1em}}{\qed\bigskip}
\newenvironment{proof-attempt}{\noindent{\bf Proof Attempt}\hspace*{1em}}{\qed\bigskip}
\def\fnum@figure{{\bf Figure \thefigure}}
\def\fnum@table{{\bf Table \thetable}}
\long\def\@mycaption#1[#2]#3{\addcontentsline{\csname
  ext@#1\endcsname}{#1}{\protect\numberline{\csname
  the#1\endcsname}{\ignorespaces #2}}\par
  \begingroup
    \@parboxrestore
    \small
    \@makecaption{\csname fnum@#1\endcsname}{\ignorespaces #3}\par
  \endgroup}
\def\mycaption{\refstepcounter\@captype \@dblarg{\@mycaption\@captype}}
\newif\ifnotesw
\def\pr{\mathbb{P}}
\def\E{\mathbb{E}}
\def\bin{{\rm Bin}}
\def\po{{\rm Po}}
\def\al{\alpha}
\def\gm{\gamma}
\def\lm{\lambda}
\def\th{\theta}
\def\Th{\Theta}
\def\o{o}
\def\beq{\begin{equation}}
\def\eeq{\end{equation}}
\def\beqn{\begin{eqnarray}}
\def\eeqn{\end{eqnarray}}
\title{Bootstrap Percolation on Random Geometric Graphs}
\author{
Milan Bradonji\'c and Iraj Saniee\\
\\
Mathematics of Networks and Communications\\ 
Bell Labs, Alcatel-Lucent\\ 
600 Mountain Avenue, Murray Hill, NJ 07974, USA 
\\
\texttt{\{milan,iis\}@research.bell-labs.com}
}
\date{}
\begin{document}
\maketitle

\begin{abstract}
Bootstrap percolation has been used effectively to model phenomena
as diverse as emergence of magnetism in materials,
spread of infection,
diffusion of software viruses in computer networks,
adoption of new technologies, 
and emergence of collective action and cultural fads
in human societies.
It is defined on an (arbitrary) network of
interacting agents whose state is determined
by the state of their neighbors according to a threshold rule.
In a typical setting, bootstrap percolation starts by random
and independent ``activation'' of nodes with a fixed probability $p$,
followed by
a deterministic process for additional activations based on the
density of active nodes in each neighborhood ($\theta$ activated nodes). 
Here, we study bootstrap percolation on random geometric graphs in
the regime when the latter are (almost surely) connected. Random
geometric graphs provide an appropriate model in settings where the
neighborhood structure of each node is determined by geographical
distance, as in wireless {\it ad hoc} and sensor networks as well as in contagion.
We derive bounds on the critical thresholds $p_c', p_c''$ such that for all $p > p''_c(\theta)$ full percolation takes place, whereas for $p < p'_c(\theta)$ it does not. 
We conclude with simulations that compare 
numerical thresholds with those obtained analytically.
\end{abstract}

\section{Introduction}
\label{sec:intro}

Some crystals or lattices studied in physics and chemistry can be modeled 
as consisting of atoms occupying sites with specified probabilities. The lattice
as a whole would then exhibit certain macroscopic properties, such as (ferro)magnetism,
only when a sufficient number of neighboring sites of each atom are also similarly
occupied.
In computer memory arrays each functional memory unit can be considered as an occupied 
site, and a minimum percentage of functioning units are needed in the
vicinity of each memory unit in order to maintain the array with proper
functioning.
In adoption of new technology or emergence of cultural fads,
an individual is positively influenced 
when a sufficient number of its close friends have also done so. 

All three examples cited above may be modeled via a formal process called
``bootstrap percolation'' which is
a dynamic process that evolves similar to a cellular automaton.
Unlike cellular automata, however, this process can be defined on arbitrary graphs
and starts with random initial conditions.  
Nodes are either active or inactive. Once activated, a node remains active
forever. Each node is initially active with a
(given) probability $p$. Subsequently and at each discrete time step, a node becomes 
active if $\theta$ of its nearest neighbors
are active, for a fixed value of $\theta=1, 2, 3,\dots$.  As time evolves,
a fraction $\Phi$ of all the nodes are activated. The emergence of
macroscopic properties of interest typically involve $\Phi$ to be at or close to 1.

Gersho and Mitra~\cite{gersho-mitra} studied a similar model for adoption of new communication services using a random regular
graph and obtained (implicit) critical thresholds for widespread adoption.
Chalupa~{\it et al}~\cite{chalupa-1979-bootstrap} were the first to introduce bootstrap 
percolation formally to explain ferromagnetism.
Their analysis is carried out on regular trees (Bethe lattices) and 
a fundamental recursion is derived for computation of the critical threshold
that has since been used extensively.
In the more recent past, results for non-regular (infinite)
trees have also been derived by Balogh~{\it et al}~\cite{balogh-2006-bootstrap}.
Aizenman and Lebowitz~\cite{aizenman-1988-metastability} studied metastability of 
bootstrap percolation on the $d$-dimensional Euclidean lattice $\Z^d$
which has now been thoroughly investigated in two
and three dimensions, see~\cite{holroyd-2003-sharp,cerf-cirillo}.
The existence of a sharp metastability threshold for
bootstrap percolation in two-dimensional lattices was proved
by Holroyd~\cite{holroyd-2003-sharp} and recently generalized to $d$-dimensional 
lattices by Balogh~{\it et al}~\cite{balogh-2011-sharp}. Even more recently,
bootstrap percolation has been studied on random graphs $G(n,p)$ by Luczak~{\it et al}~\cite{luczak-bootstrap}.
In \cite{watts-2002} Watts proposed a model of formation of opinions
in social networks in which the percolation threshold is a certain
fraction of the size of each neighborhood rather than a fixed value,
a departure from the standard model that is  
used by Amini in~\cite{amini-2010-bootstrap} for random graphs with a given degree sequence.

Many diffusion processes of interest have a
{\it physical contact} element.
A link in an {\it ad hoc} wireless network, a sensor network, or
an epidemiological graph
connotes physical proximity within a certain locality. Study of diffusion of
virus spread in {\it ad hoc} wireless, sensor or epidemiological graphs requires this notion of
neighborhood for accurate estimation of likelihood of full percolation.  This is in contrast
to models with long-range reach where physical proximity plays little, if any,
role.
The natural random model for such phenomena is the random geometric graph.
In this work, we focus on bootstrap percolation on random geometric graphs,
a topic that has not been investigated, to the best of our knowledge,
and obtain tight bounds on their critical thresholds for full percolation.

\section{Random Geometric Graph Model}
\label{sec:rgg.model}
One of the transitions from the random graph model $G(n,p)$ of Erd\H{o}s and R\'enyi~\cite{erdos-1959-random, erdos-1960-evolution} and Gilbert~\cite{gilbert-1959-random} to models that may describe processes constrained by geometric distances among the nodes is the model of random geometric graphs (RGGs) by Gilbert \cite{gilbert-1961-random}. 
The RGG model has been used in many disciplines:
for modeling of wireless sensor networks~\cite{pottie-2002-wireless},
cluster analysis, statistical physics, hypothesis testing,
spread of computer viruses in wired networks, 
processes involving physical contact among individuals, 
as well as other related disciplines,
see~\cite{penrose:book} for more details.
For example, a wireless sensor network typically contains a large number of randomly deployed nodes with 
links determined by geometric proximity enabled by (a small) radio range among the nodes that is 
sufficient to enable successful signal transmission across the network.  
A further application of RGG is in representing $d$-attribute data where numerical attributes 
are used as coordinates in $\R^d$ and two nodes are considered connected if they are within a 
threshold (Euclidean) distance $r$ of each other.  
The metric distance imposed on such a RGG captures the similarity between data elements.

Consider an RGG in two dimensions that is constructed by drawing $n$
nodes uniformly
at random within $[0,1]^2$ and connecting every pair of nodes at
Euclidean distance at most $r$.  Let us denote this process by
$\textrm{RGG}(n,r)$.
%
A summary of basic structural properties of $\textrm{RGG}(n,r)$ is as follows. 
\begin{enumerate}
\item
$\textrm{RGG}(n,r)$ is a `homogeneous' geometrical model where the distribution
of the number of nodes within a distance $r$ from a given node follows the
same binomial distribution $\bin(n-1,r^2\pi)$ (with appropriate
correction when the center is within a distance $r$ of the boundary). 
The average degree $D=\E(\deg)$ of a node is $n r^2 \pi$ in the limit.
\item
There is a \textit{critical value} $\lambda_c$ such that
for $r > \sqrt{\lambda_c/n}$ there exists a \textit{giant component},
i.e., the largest connected component of order $\Th(n)$ nodes contained
in $\textrm{RGG}(n,r)$ whp\footnote{Whp or ``with high probability'',
means with probability one as $n$, the number of nodes, tends
to infinity.},~\cite{penrose:book}. We denote the critical threshold
for existence of a giant component by $r_c:=\sqrt{\lm_c/n}$.
\item
In this regime, the second largest component is of order $\Oh(\ln^2 n)$. 
\item
The exact theoretical value of the constant $\lambda_c$ is not known. 
It is experimentally established that
$\lambda_c \approx 1.44$ for the dimension
$d=2$~\cite{RintoulTorquato1997}, while theoretical bounds
$\lambda_c \in [0.696,3.372]$ are given
in~\cite{meester-2003-continuum}. There has been a recent improvement
of the lower bound
$\lambda_c > 4/(3\sqrt{3}) \approx 0.7698$~\cite{kong-2007-analytical}.
\item
$\textrm{RGG}(n,r)$ is connected whp for $r > \sqrt{\ln n /\pi n}$,~\cite{penrose97longest,gupta-1998-critical}. We denote the critical threshold for connectedness by $r_t:=\sqrt{\ln n /\pi n}$.
\item
Every monotone property in a $\textrm{RGG}(n,r)$ (e.g., existence of a giant component and connectedness) exhibits a sharp threshold~\cite{goel-2004-sharp}.
\end{enumerate}

In order to simplify our analysis on RGGs, we now introduce $G_{n,r}$ which is asymptotically isomorphic to $\textrm{RGG}(n,rn^{-1/2})$. Let $\mathcal{X}$ be a Poisson point process of intensity
$1$ on $\mathbb{R}^2$. Consider points of $\mathcal{X}$ contained in
$[0,\sqrt{n}]^2$ representing the nodes of a graph denoted $G_{n,r}$.
Two nodes of $G_{n,r}$ are connected if their Euclidean distance is at
most $r$. Our analysis from here on will be based upon the fact that an instance
of $G_{n,r}$ is isomorphic to an instance of $\textrm{RGG}(n,rn^{-1/2})$
whp~\cite{penrose:book}. 

We parameterize $r = \sqrt{\pi^{-1} a \ln n}$ by introducing a new parameter $a$ which measures how denser $G_{n,r}$ is compared to an instance $G_{n,r_t}$ at the threshold for connectedness $r_t$.
The condition $a>1$ enables us to deal with an asymptotically connected $G_{n,r}$~\cite{gupta-1998-critical,penrose:book}. Notice that for sufficiently large $n$ the expected degree is concentrated around its mean $a \ln n$, which can be easily derived from the Chernoff and union bounds. 

For $n=1000$, the critical thresholds for the existence of a giant component and connectedness in $G_{n,r}$ satisfy $r_c \approx 0.0316$ and $r_t \approx 0.0469$, respectively. 
In Figure~\ref{fig:rggs_1_2} and Figure~\ref{fig:rggs_3_4}, we present $G_{n,r}$ for four different regimes when $r$ takes values: $0.020, 0.035, 0.045, 0.050$, respectively. The values $0.020$ and $0.035$ correspond to `ultra'-sparse regime and emergence of a giant component, Figure~\ref{fig:rggs_1_2}. The values $0.045$ and $0.050$ correspond to `almost'-connected and connected regimes, Figure~\ref{fig:rggs_3_4}.

\begin{figure}[!htb]
\centering
\includegraphics[width=2.45in]{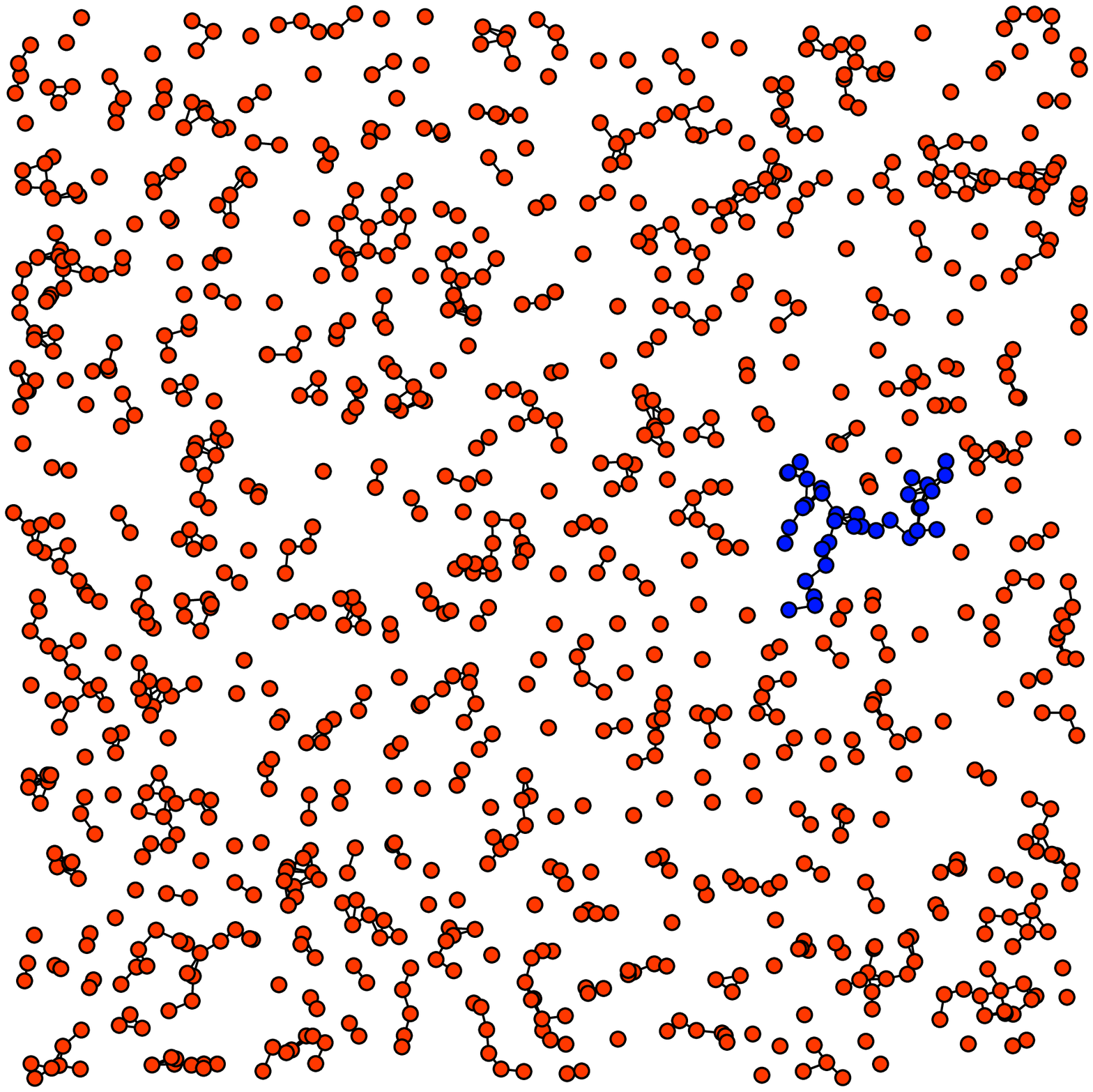}
\includegraphics[width=2.45in]{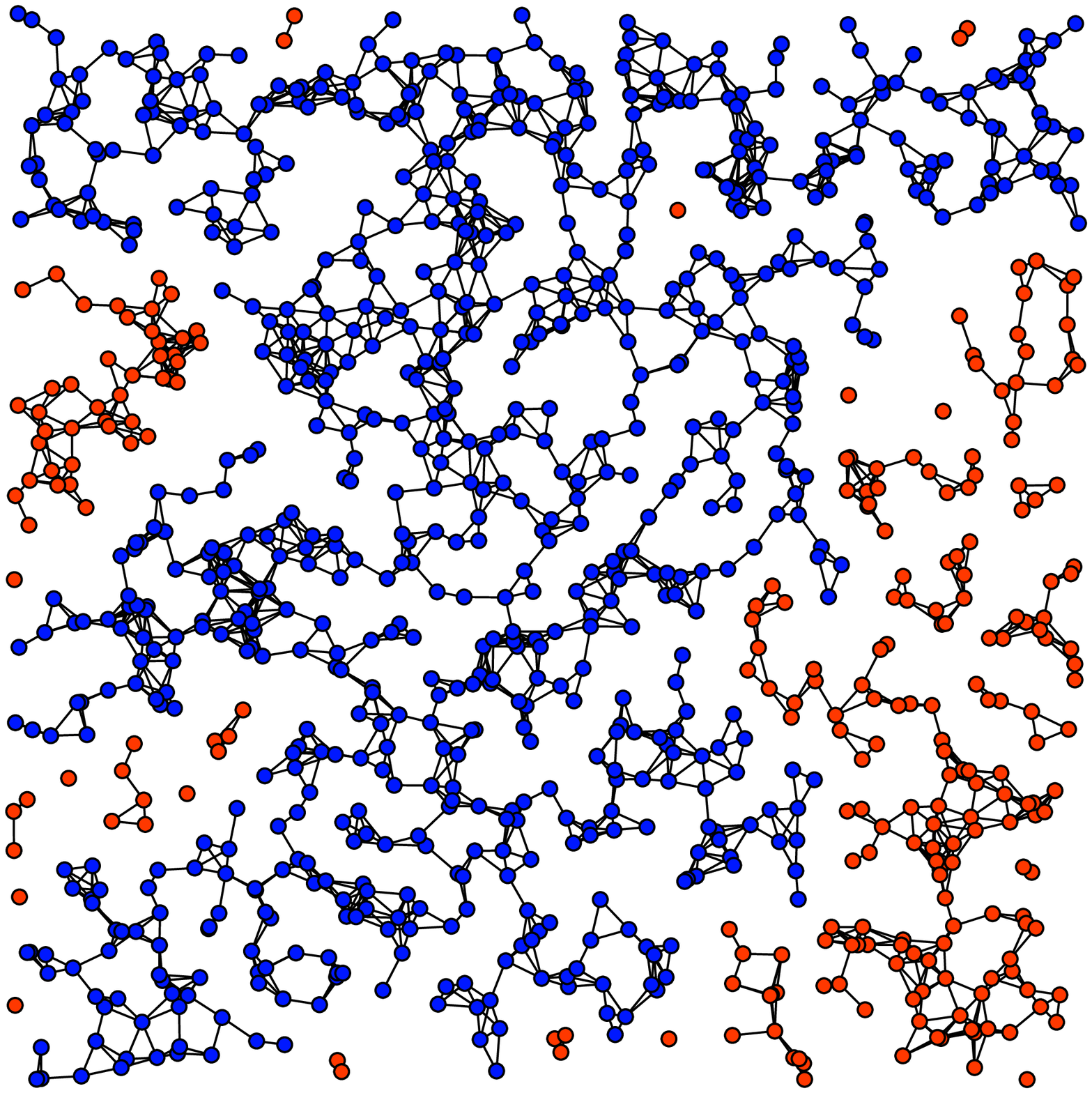}
\caption{`Ultra'-sparse regime and the emergence of the giant component.}
\label{fig:rggs_1_2}
\end{figure}

\begin{figure}[!htb]
\centering
\includegraphics[width=2.45in]{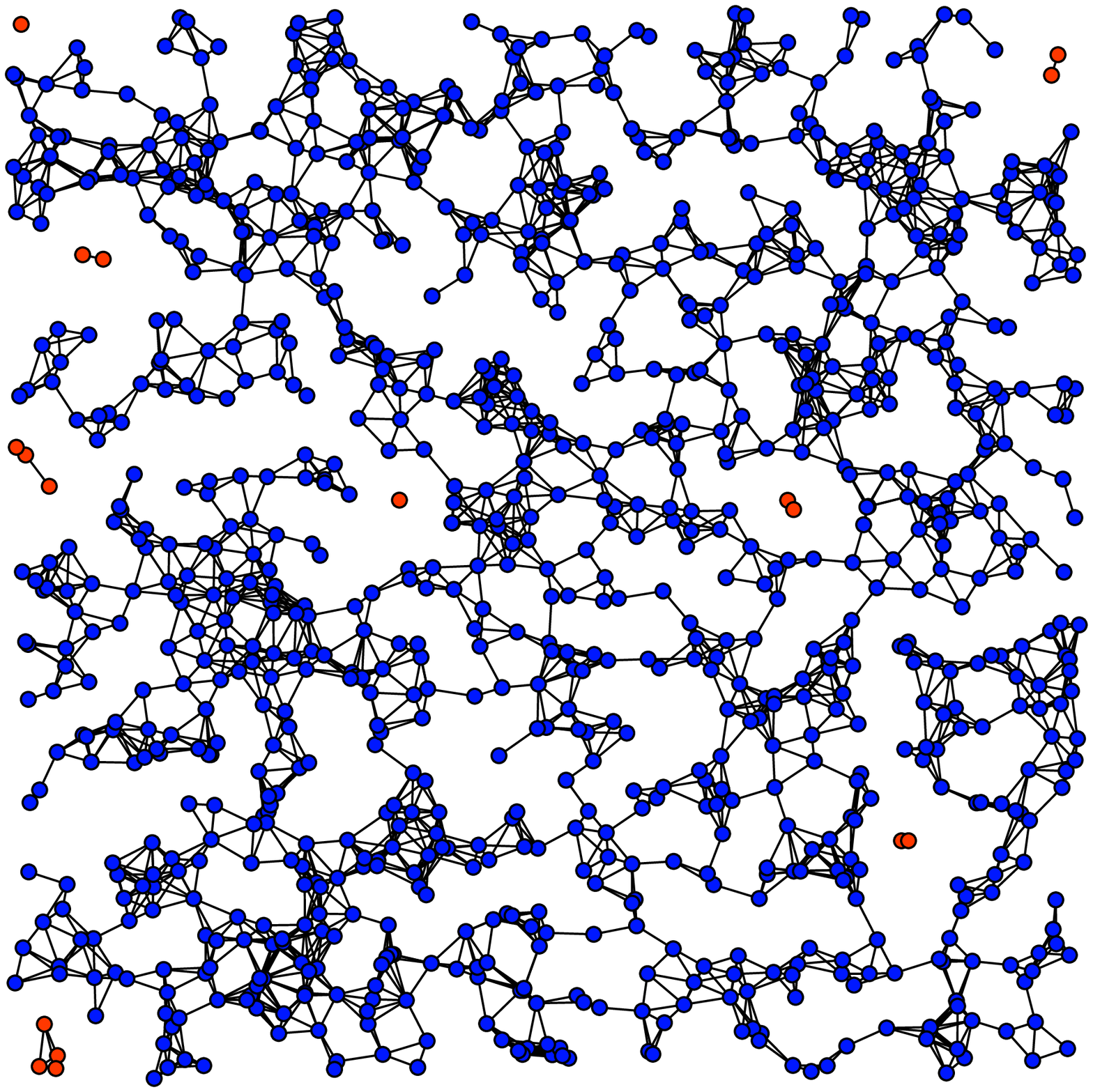}
\includegraphics[width=2.45in]{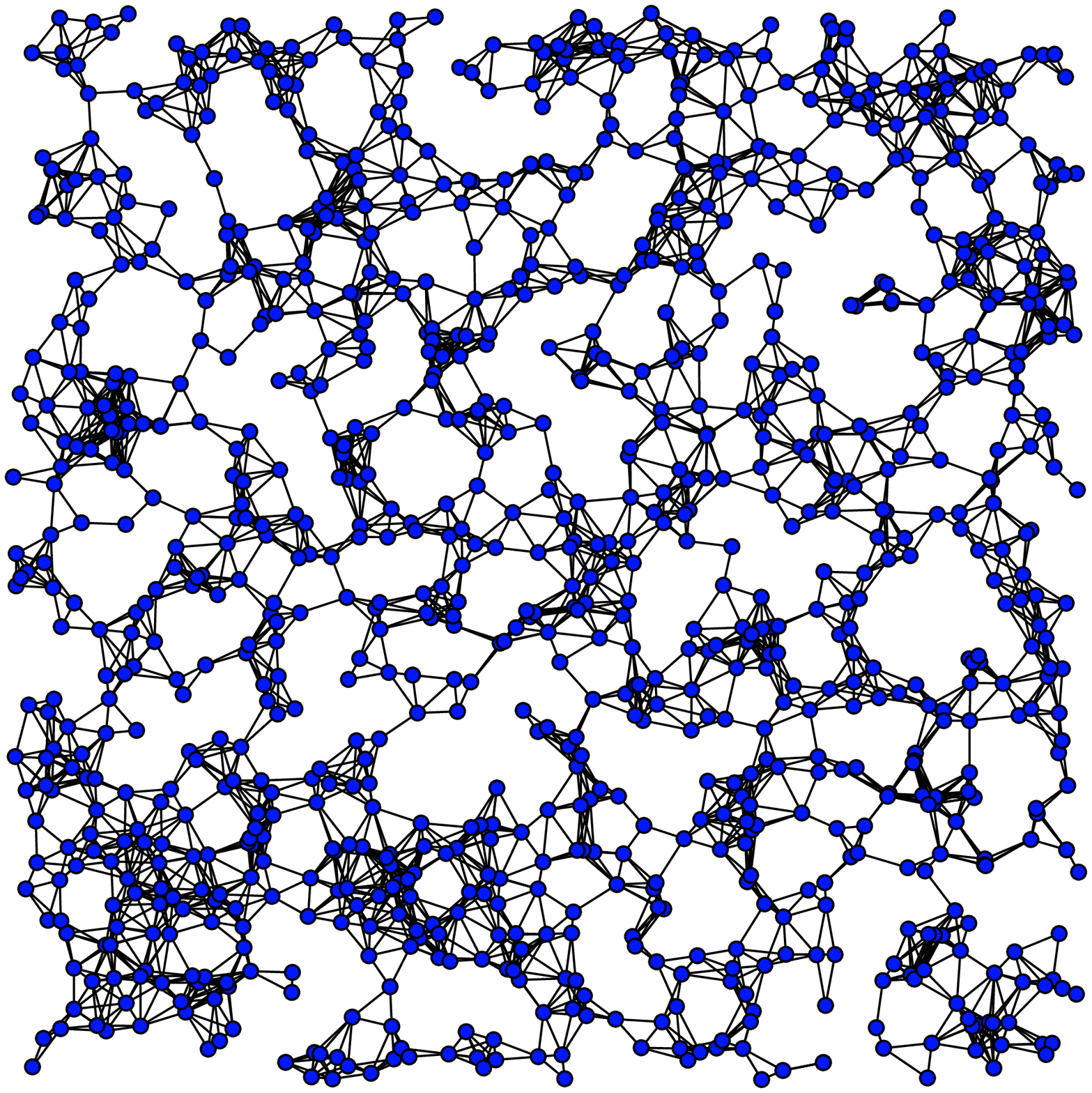}
\caption{`Almost'-connected and connected regimes.}
\label{fig:rggs_3_4}
\end{figure}

\section{Bootstrap Percolation}
\label{sec:bootstrap}

Bootstrap percolation (BP) is a cellular automaton defined on an underlying graph $G=(V,E)$ with state space $\{0,1\}^{V}$ whose initial configuration is chosen by a Bernoulli product measure. In other words, every node is in one of two different states $0$ or $1$ ({\it inactive} or {\it active} respectively), and a node becomes active with probability $p$ independently of other nodes within the initial configuration. 

After drawing an initial configuration at time $t=0$, a discrete time deterministic process updates the configuration according to a local rule:
an inactive node becomes active at time $t+1$ if the number of its active neighbors at $t$ (not necessarily the nearest ones) is greater than some defined {\it threshold} $\theta$. Once an inactive node becomes active it remains active forever. A configuration that does not change at the next time step is a {\it stable} configuration.
A configuration is {\it fully active} if all its nodes of are active.

An interesting phenomenon to study is metastability near a first-order phase transition. 
Do there exist $0< p_c' \leq p_c''<1$ such that: 
\beq
\nonumber
\left(\forall p<p_c'\right) \lim_{t \to \infty} \pr_p \left(V \textrm{ becomes fully active}\right) = 0\,,
\eeq
and 
\beq
\nonumber
\left(\forall p>p_c''\right) \lim_{t \to \infty} \pr_p \left(V \textrm{ becomes fully active}\right) =1 \,?
\eeq
Further, is it necessary for $p'_c$ to be asymptotically equal to $p''_c$?

A study of BP on a regular infinite tree first appeared in~\cite{chalupa-1979-bootstrap}. Subsequently, the relations between the branching number of an infinite (non-regular) tree, threshold value, and $p$ necessary to fully percolate the tree were studied in~\cite{balogh-2006-bootstrap}.

An example of BP is a $d$-dimensional lattice $\Z^d$ equipped with Bernoulli product measure with $\th=d$~\cite{aizenman-1988-metastability}. For $\Z^d$ and $V = [0,L-1]^d$ the existence of a unique threshold $p_c$ was shown in~\cite{aizenman-1988-metastability}. Concretely for $\Z^2$ and $V = [0,L-1]^2$ the exact threshold value is $p_c = \pi^2/(18 \ln L)$~\cite{holroyd-2003-sharp}. Furthermore the sharp threshold for bootstrap percolation in $\Z^d$ in all dimensions was provided in~\cite{balogh-2011-sharp}.

Additionally to BP on trees and lattices, there has been recent work of BP on random regular graphs~\cite{balogh-2007-bootstrap}, Erd\H os-R\'enyi random graphs~\cite{luczak-bootstrap}, as well as random graphs with a given degree sequence where the threshold depends upon node degree~\cite{amini-2010-bootstrap}.

\subsection{Bootstrap Percolation on Connected RGGs}
\label{sec:bootstrap.conn.rggs}

The structure of $G_{n,r}$ is conducted by random positions of its nodes and radius $r=r(n)$; so it is more `irregular' than the structure of a tree or a lattice.
In this work we are interested in BP on $G_{n,r}$ which for brevity we denote by $BP(G_{n,r},p,\theta)$. In this process a node becomes active with probability
$p$ independently of other nodes in the initial configuration and
an inactive node becomes active at the following time step if at least
$\theta=\gm D$ of its neighbors are active, where $\gm=\gm(n)$ and
$D(n)= \E(\deg) = r^2 \pi = a \ln n$ is the expected node degree.

For the critical thresholds $p_c'$ and $p_c''$ in $BP(G_{n,r},p,\theta)$, we derive bounds $p'\leq p_c'$ and $p'' \geq p_c''$ such that a connected $G_{n,r}$ does not become fully active for $p < p'$ whp, and conversely, becomes fully active for $p > p''$  whp. These bounds are schematically presented in Figure~\ref{fig:bounds}.

\begin{figure}[!htb]
\centering
\includegraphics[width=0.55\columnwidth]{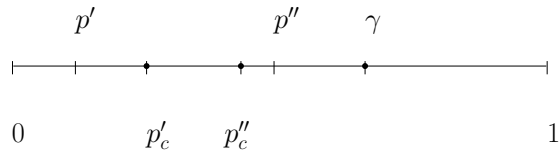}
\caption{Bounds ($p',p''$) on the critical thresholds ($p'_c,p''_c$).}
\label{fig:bounds}
\end{figure}

The main ideas of the proofs are as follows. We obtain the distribution of the number of active neighbors for each node at the initial configuration. 
For $p<p'$ we use the Poisson tail bound and the union bound (see~(\ref{eq:po1}) in Appendix) to show that an initial configuration is stable whp. 
For $p>p''$ we use the Bahadur-Rao theorem (see Claim~\ref{cl:br.poiss} in Appendix) to lower bound the number of active neighbors for each node. Then we develop a geometric argument to show that a stable, fully active, configuration is reached within $\Oh(\sqrt{n}/r)$ steps whp. This geometric argument leverages the following simple observation about BP in $\Z^2$ with $\th=1$.

\begin{lemma}
\label{lemma:z2th1}
Consider BP in $\Z^2$ with the threshold $\theta=1$ and the initial probability $p>0$. For any $N$ and $p=\omega(1/\sqrt{N})$, a square $[0,N]^2$ becomes fully active within $\Oh(N)$ steps whp.   
\end{lemma}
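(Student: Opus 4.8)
The plan is to exploit the fact that with threshold $\theta=1$ the bootstrap dynamics on $\mathbb{Z}^2$ degenerates into a deterministic infection wave: an inactive node turns active the moment a single neighbor is active, so as soon as one seed is present the active region grows by exactly one layer per step in the graph (equivalently $\ell_1$) metric. Hence the only randomness that matters is whether the initial Bernoulli$(p)$ configuration places at least one active node inside the target square.

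First I would bound the probability that $[0,N]^2$ contains no initially active vertex. Its $(N+1)^2$ lattice points are independently active with probability $p$, so this probability equals $(1-p)^{(N+1)^2}\le\exp(-p(N+1)^2)$. Under $p=\omega(1/\sqrt{N})$ we get $p(N+1)^2\ge pN^2=\omega(N^{3/2})\to\infty$, so with probability $1-o(1)$ there is an active vertex $v=(x_0,y_0)\in[0,N]^2$ at time $0$.

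Next, conditioning on such a $v$, I would show by induction on $t$ that the active set $A_t$ contains the $\ell_1$-ball $\{u\in\mathbb{Z}^2:\|u-v\|_1\le t\}$: the base case is trivial, and the inductive step is precisely the update rule, since every lattice point at $\ell_1$-distance $t+1$ from $v$ has a neighbor at distance $t$, which is active by hypothesis. Since every point of $[0,N]^2$ lies within $\ell_1$-distance $|x-x_0|+|y-y_0|\le 2N$ of $v$, the square is fully active by time $2N=\O(N)$. Together with the previous paragraph this proves the lemma.

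There is no serious obstacle here — the statement is deliberately a lightweight engine to be reused in the main RGG argument. The only points needing a line of care are: noting that the $\ell_1$-diameter of $[0,N]^2$ is $2N$ (the seed may sit in a corner), so the deterministic fill time is at most $2N$ no matter where the seed lands; and observing that the hypothesis $p=\omega(1/\sqrt{N})$ is in fact far stronger than the $p=\omega(1/N^2)$ the seeding argument alone requires, the stronger form being stated because that is the regime in which the lemma is later invoked.
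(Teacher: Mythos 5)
Your proof is correct and is exactly the argument the paper intends (the paper states \lemref{z2th1} without proof, relying on it as a ``simple observation''): a single seed plus the $\theta=1$ update rule yields a deterministic $\ell_1$-ball growth that fills the square in at most $2N$ steps, and $p=\omega(1/\sqrt{N})$ guarantees a seed exists whp. Your side remark that the seeding step only needs $p=\omega(1/N^2)$ is also accurate and consistent with how the lemma is actually invoked in the proof of \thmref{complete.percolation}, where the relevant event is that at least one of the $\Theta(N^2)$ cells is initially active.
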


We first introduce the following functions upon which our analysis will heavily depend. For the function $H(x):= x \ln x - x +1$ on $[0,+\infty)$ (see Figure~\ref{fig:HJ} left), define $H_{L}^{-1}:[0,1] \to [0,1]$ to be the inverse of $H(x)$ on $[0,1]$, and $H_{R}^{-1}:[0,+\infty) \to [1,+\infty)$ to be the inverse of $H(x)$ on $[1,+\infty)$. Analogously for the function $J(x):=x^{-1}H(x)=\ln x - 1 + x^{-1}$ on $(0,+\infty)$ (see Figure~\ref{fig:HJ} right), define $J_{L}^{-1}:[0,+\infty] \to [0,1]$ to be the inverse of $J(x)$ on $[0,1]$, and $J_{R}^{-1}:[0,+\infty) \to [1,+\infty)$ to be the inverse of $J(x)$ on $[1,+\infty)$.

\begin{figure}
\centering 
\begin{minipage}[htb!]{0.485\linewidth}
\includegraphics[width=7cm]{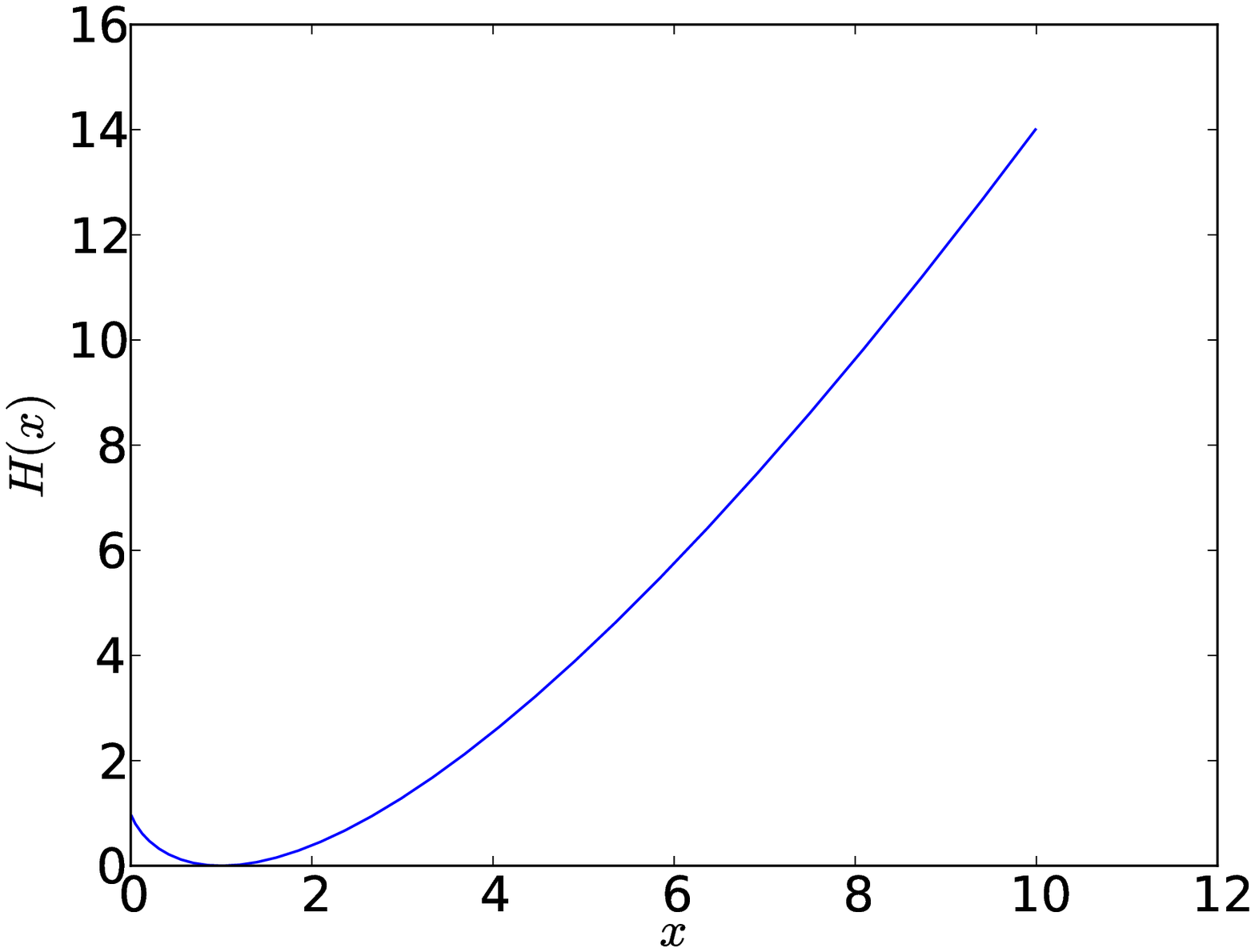}
\end{minipage}
\begin{minipage}[htb!]{0.485\linewidth}
\includegraphics[width=7cm]{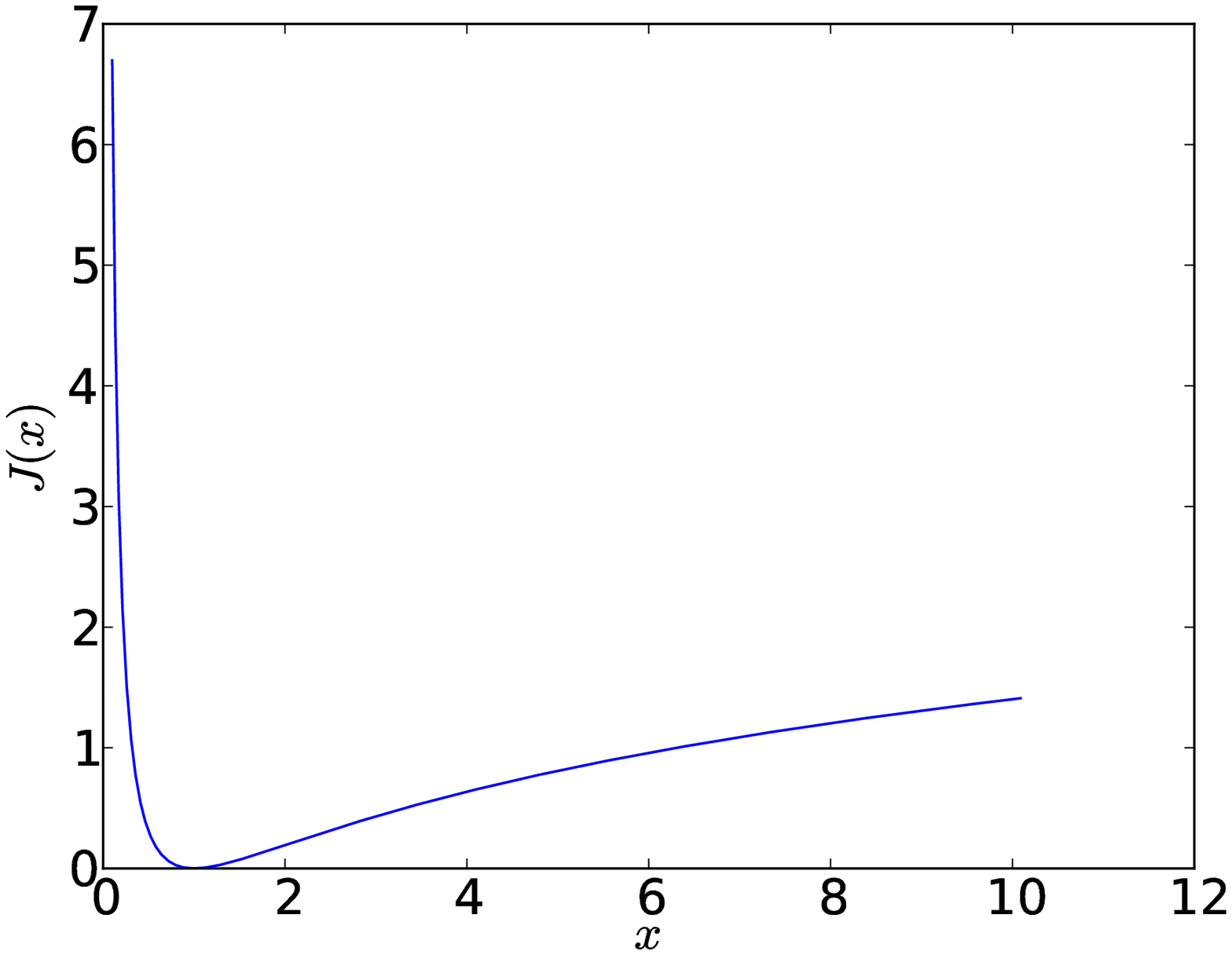}
\end{minipage}
\caption{Functions $H(x)$ and $J(x)$.}
\label{fig:HJ}
\end{figure}

We now provide bounds on the critical thresholds on $p$ in Theorem~\ref{thm:incomplete.percolation} and Theorem~\ref{thm:complete.percolation}.

\begin{theorem}
\label{thm:incomplete.percolation}
Consider bootstrap percolation $BP(G_{n,r},p,\theta)$ where $r=\sqrt{\pi^{-1} a \ln n}$ and $\theta=\gm a \ln n$. For $a>1$, $\gm \in (0,1)$ and 
when \begin{equation*}
p < p':=\gm/J_R^{-1}(1/a\gm) \,, 
\end{equation*}
$G_{n,r}$ does not become fully active whp.
\end{theorem}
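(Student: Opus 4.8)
The plan is to prove the stronger statement that, for $p<p'$, the \emph{initial} configuration of $BP(G_{n,r},p,\theta)$ is already stable whp, so the configuration never changes; since whp not every vertex is active at time $0$ — the probability that all are is $\E\bigl[p^{|\mathcal{X}\cap[0,\sqrt{n}]^2|}\bigr]=e^{-(1-p)n}=o(1)$ — it follows that $G_{n,r}$ never becomes fully active. Hence the whole theorem reduces to showing that whp no inactive vertex has $\theta$ or more active neighbours at time $0$, so the geometry of $G_{n,r}$ enters only through the law of a single vertex's active‑neighbour count.

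To establish this I would proceed as follows. For a fixed location $x$ in $[0,\sqrt{n}]^2$, conditioning on $x$ being a vertex of $\mathcal{X}$ leaves the remaining points inside $B(x,r)$ distributed as a rate‑$1$ Poisson process (independence property of the Poisson process); thinning by the independent Bernoulli($p$) activations then shows that the number $A_x$ of active neighbours of $x$ has law $\poiss\!\bigl(p\,|B(x,r)\cap[0,\sqrt{n}]^2|\bigr)$, which is stochastically dominated by $\poiss(p\pi r^2)=\poiss(p\,a\ln n)$ uniformly in $x$ (vertices near the boundary only lower the Poisson mean). Since $p<p'<\gamma$ gives $\theta/\mu=\gamma/p>1$ with $\mu:=p\,a\ln n$, the Poisson upper‑tail bound (\ref{eq:po1}) applies and yields
\begin{equation*}
\pr(A_x\ge\theta)\;\le\;e^{-\mu H(\theta/\mu)}\;=\;n^{-p a\,H(\gamma/p)}\;=\;n^{-a\gamma\,J(\gamma/p)},
\end{equation*}
the last equality being the identity $p\,a\,H(\gamma/p)=a\gamma\,J(\gamma/p)$ coming from $J(y)=y^{-1}H(y)$. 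Summing this over the vertices — rigorously, integrating against the intensity measure of $\mathcal{X}$ via the Mecke equation for Poisson processes, which is the precise form of the informal ``union bound over vertices'' — bounds the expected number of inactive vertices with at least $\theta$ active neighbours by $n\cdot n^{-a\gamma J(\gamma/p)}=n^{1-a\gamma J(\gamma/p)}$, and Markov's inequality then gives stability provided $a\gamma J(\gamma/p)>1$ (and, for an $o(1)$ rate, that it is bounded away from $1$).

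It then remains to rewrite $p<p'$ as $a\gamma J(\gamma/p)>1$. Set $y_0:=J_R^{-1}(1/(a\gamma))$; then $y_0\ge1$, and since $J(1)=0$ while $1/(a\gamma)>0$ we in fact have $y_0>1$ and $J(y_0)=1/(a\gamma)$. The defining relation $p'=\gamma/y_0$ turns $p<p'$ into $\gamma/p>y_0$, and because $J$ is strictly increasing on $[1,\infty)$ this gives $J(\gamma/p)>J(y_0)=1/(a\gamma)$, that is $a\gamma J(\gamma/p)>1$, as required. I expect the only real obstacle to be the bookkeeping that makes the ``union bound over the random vertex set'' rigorous: one genuinely needs the Poisson (thinning) law of $A_x$ with mean $p\pi r^2$, not the conditional binomial $\bin(\deg(x),p)$, since replacing $\deg(x)$ by a crude worst‑case bound $C\,a\ln n$ with $C>1$ would degrade the exponent to $a\gamma J(\gamma/(Cp))$, which can dip below $1$ when $p$ is just under $p'$. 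Beyond that, the argument is the elementary calculus identity above plus one application of Markov's inequality, together with the routine check that the tail bound and the inverse $J_R^{-1}$ are taken on the same (increasing) branch.
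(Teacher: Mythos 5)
Your proposal is correct and follows essentially the same route as the paper: show that for $p<p'$ the initial configuration is already stable whp, by bounding the Poisson law of each vertex's active-neighbour count with the tail estimate \eqref{eq:po1}, taking a union bound over the roughly $n$ vertices, and translating the resulting exponent condition $a\gamma J(\gamma/p)>1$ into $p<\gamma/J_R^{-1}(1/(a\gamma))$ via the monotonicity of $J$ on $[1,\infty)$. Your additions — the Mecke-equation formulation of the union bound and the explicit check that the initial configuration is whp not already fully active — are sound refinements of the same argument rather than a different approach.
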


\begin{proof}
We show that for the conditions of the assertion, an initial configuration is stable. The number of active nodes in the initial configuration follows Poisson distribution $\po(pn)$. The degree distribution of a node is $\po(r^2 \pi)-1$, and the expected degree $D = r^2 \pi = a \ln n$. By the thinning theorem~\cite{penrose:book} the number of active neighbors in the initial configuration follows $\po(p D) - 1$. Consider the activation rule in $BP(G_{n,r},p,\theta)$. The probability that a node becomes active at the next time step given that it is inactive initially is $\pr \left( \po(p D) - 1 \geq \gm D \right)$. 

For $p>\gm$, given that $p D \to \infty$, the tail bound on a Poisson random variable~(\ref{eq:po1}) implies for any node $\pr \left( \po(p D) - 1 \geq \gm D \right) = 1- o(1/n)$. Hence an initial configuration becomes fully active at the next time step with probability $1-o(1)$. Therefore we consider the case $p \leq \gm$ and seek a maximal $p' \leq \gm$ (see Figure~\ref{fig:bounds}) such that $BP(G_{n,r}, p', \gamma)$ does not become fully active whp. It follows
\beq
\label{eq:btstp.conditions}
\pr \left( \po(p D) - 1 \geq \gm D \right) \leq \pr \left( \po(p D) \geq \gm D \right) \leq \exp(-p D H(\gm/p)) \,.    
\eeq
%
The same inequality~(\ref{eq:po1}) yields that the number of nodes $\po(n)$ within the square $[0,\sqrt{n}]^2$ is concentrated around its mean $n$ whp. Hence the union bound over all nodes provides
\beq
\label{eq:stable.conf}
\pr_p \left( \textrm{the initial configuration is stable} \right) \geq 1 - \exp\left((1+\o(1)) \ln n -p D H(\gm/p) \right) \,.    
\eeq
Given $D=a\ln n$, the condition $p a H(\gm/p)>1$ suffices that the initial configuration is stable whp. The function $J(x)=x^{-1}H(x)$ is monotonically decreasing on $(0,1)$, monotonically increasing on $(1,+\infty)$, with the minimum $0$ attained at $x=1$. Hence for any positive $\gm < +\infty$ there are two solutions of $J(x) = 1/a \gm$, denoted $x_1 < 1 < x_2$. This yields $p>\gm/x_1>\gm$ or $p<\gm/x_2<\gm$. The acceptable solution is $p<\gm/x_2$, since we consider the case $p<\gm$. For $J(\gm/p) > 1/a\gm$ from (\ref{eq:stable.conf}) it follows the probability that the initial configuration is stable tends to one as $n$ tends to infinity. Finally, a bound on $p$ is given by 
\beq
\nonumber
p < p':= \gm/J_R^{-1}(1/a\gm)\,, 
\eeq
which concludes the proof. 
\end{proof}

The following result clarifies the feasible region for $a$ and $\gamma$ in Theorem~\ref{thm:complete.percolation}.
\begin{lemma}
\label{lm:ag}
The condition $a \geq 5 \pi /H(5 \pi \gamma)$ is equivalent to: 
\begin{equation*}
\gamma \in \left[0,\frac{1}{5\pi} H_{R}^{-1} \left( 5\pi/a \right)\right], \quad \textrm{ for }a<5 \pi \,, \\
\end{equation*}
and 
\begin{equation*}
\gamma \in \left[\frac{1}{5\pi} H_{L}^{-1} \left( 5\pi/a \right),\frac{1}{5\pi} H_{R}^{-1} \left( 5\pi/a \right)\right], \quad \textrm{ for } a \geq 5 \pi \,.
\end{equation*}
\end{lemma}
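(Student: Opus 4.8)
The plan is to strip the parameters $a$ and $\gamma$ out of the picture and reduce the claim to an elementary description of a sublevel set of the single function $H(x)=x\ln x-x+1$. As a first step I would rewrite the hypothesis: since $H(5\pi\gamma)\geq 0$, with equality exactly at $5\pi\gamma=1$, clearing the nonnegative denominator reduces the condition to the equivalent inequality $H(5\pi\gamma)\leq 5\pi/a$ (this also correctly covers the degenerate point $5\pi\gamma=1$, where the left side is $0$). Setting $c:=5\pi/a>0$ and $y:=5\pi\gamma\geq 0$, the lemma becomes a matter of identifying the set $\{\,y\geq 0:H(y)\leq c\,\}$ and then rescaling by $5\pi$ at the end.

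Next I would record the shape of $H$ --- essentially the facts already noted for $J=H/x$ in the paragraph preceding Theorem~\ref{thm:incomplete.percolation}: $H$ is continuous on $[0,+\infty)$ with $H(0)=1$, is strictly decreasing on $[0,1]$ down to $H(1)=0$, and is strictly increasing on $[1,+\infty)$ up to $+\infty$. Hence $H_L^{-1}:[0,1]\to[0,1]$ is the strictly decreasing inverse of $H|_{[0,1]}$ and $H_R^{-1}:[0,+\infty)\to[1,+\infty)$ is the strictly increasing inverse of $H|_{[1,+\infty)}$, so on each monotone branch the constraint $H(y)\leq c$ becomes a one-sided bound on $y$: on $[0,1]$ it is $y\geq H_L^{-1}(c)$ (the inequality reverses because $H_L^{-1}$ is decreasing), and on $[1,+\infty)$ it is $y\leq H_R^{-1}(c)$.

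Then I would split according to whether $c\geq 1$ or $c<1$, i.e.\ whether $a\leq 5\pi$ or $a>5\pi$. If $c\geq 1$: every $y\in[0,1]$ already satisfies $H(y)\leq 1\leq c$, while on $[1,+\infty)$ the constraint is $y\leq H_R^{-1}(c)$, so the solution set is $[0,H_R^{-1}(c)]$, which after dividing by $5\pi$ is exactly the stated interval for $a<5\pi$ (and at $a=5\pi$ one has $H_L^{-1}(1)=0$, so the two displayed formulas coincide). If $c<1$: on $[0,1]$ the constraint gives $y\in[H_L^{-1}(c),1]$ and on $[1,+\infty)$ it gives $y\in[1,H_R^{-1}(c)]$, whose union is $[H_L^{-1}(c),H_R^{-1}(c)]$, rescaling to the stated interval for $a\geq 5\pi$. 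Since each step in this chain is an equivalence, both directions of the ``if and only if'' follow at once.

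\textbf{Main obstacle.} There is no deep difficulty here; the lemma is a repackaging of the monotonicity of $H$ into inverse-function language. The two points that need genuine care are (i) getting the direction of the inequality right when the denominator is cleared, including the degenerate value $5\pi\gamma=1$ where $H$ vanishes, and (ii) remembering that the left branch $H_L^{-1}$ is \emph{decreasing}, so that on $[0,1]$ the constraint ``$H(y)\leq c$'' becomes ``$y\geq H_L^{-1}(c)$'' rather than the other way around --- mishandling that reversal is the one natural place to introduce a sign error and land on the complementary region.
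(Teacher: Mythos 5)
Your reduction in the very first step goes the wrong way, and everything downstream inherits the error. For $H(5\pi\gamma)>0$, multiplying $a \geq 5\pi/H(5\pi\gamma)$ by the positive number $H(5\pi\gamma)$ and dividing by $a$ gives $H(5\pi\gamma) \geq 5\pi/a$: the hypothesis is a \emph{superlevel} condition on $H$, not the sublevel condition $H(5\pi\gamma)\leq 5\pi/a$ that you analyze. (The degenerate point flips as well: at $5\pi\gamma=1$ we have $H=0$, so $a\geq 5\pi/H(5\pi\gamma)$ fails for every finite $a$, whereas your rewritten inequality $0\leq 5\pi/a$ holds.) Carrying the correct direction through your own case analysis, with $c=5\pi/a$ and $y=5\pi\gamma$, the set $\{y\geq 0: H(y)\geq c\}$ is $[H_R^{-1}(c),+\infty)$ when $c>1$ and $[0,H_L^{-1}(c)]\cup[H_R^{-1}(c),+\infty)$ when $c\leq 1$ --- up to endpoints, the complement of the intervals you obtain. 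Concretely, for $a\geq 5\pi$ and $\gamma\to 0^{+}$ one has $H(5\pi\gamma)\to 1\geq 5\pi/a$, so the condition holds although such $\gamma$ lies outside your interval $[\frac{1}{5\pi}H_L^{-1}(5\pi/a),\frac{1}{5\pi}H_R^{-1}(5\pi/a)]$, while $\gamma=1/(5\pi)$ lies inside that interval although the condition fails there. What your argument actually characterizes is the condition $aH(5\pi\gamma)\leq 5\pi$.

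That your final intervals coincide with the displayed statement of Lemma~\ref{lm:ag} is therefore a warning rather than a confirmation: the printed intervals describe the sublevel set and cannot be an equivalent restatement of $a\geq 5\pi/H(5\pi\gamma)$, and the paper's one-line proof (``by inspection of $H(x)$'') gives no guidance on which side is intended. The condition actually used in part (i) of the proof of Theorem~\ref{thm:complete.percolation} is $\frac{a}{5\pi}H(5\pi\gamma)\geq 1$, so that the bound in \eqref{eq:every.cell.dense.enough} tends to one; within the theorem's range $\gamma\in(0,1/(5\pi))$, where $H$ is decreasing, its feasible region is $\gamma\in(0,\frac{1}{5\pi}H_L^{-1}(5\pi/a)]$, which is nonempty only when $a\geq 5\pi$ and is empty for $a<5\pi$. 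So a correct write-up must carry the superlevel direction through (and then obtains endpoints different from the ones you state), or else the hypothesis of the lemma must be changed to the sublevel condition your argument in fact treats; as written, your first step is invalid and the equivalence you establish is not the one asserted.
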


\begin{proof}
By inspection of the function $H(x)$. 
\end{proof}

\begin{theorem}
\label{thm:complete.percolation}
Consider bootstrap percolation $BP(G_{n,r},p,\theta)$ where $r=\sqrt{\pi^{-1} a \ln n}$, $\theta=\gm a \ln n$, and $a>1$.
When $a \geq 5 \pi /H(5 \pi \gamma)$ and $\gamma \in (0,1/5\pi)$ for 
\begin{equation*}
p>p'':= \min \left\{ \gamma, \frac{5\pi \gamma} { J_{R}^{-1}\left( 1/ a \gamma \right)} \right\} \,,
\end{equation*}
$G_{n,r}$ becomes fully active within $\Oh(\sqrt{n}/r)$ steps whp. 
\end{theorem}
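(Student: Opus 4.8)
The plan is to run the argument in two stages: first a ``seeding'' stage where I show that in the initial configuration there are enough active nodes that, purely from the Bernoulli-$p$ draw, a positive-density sublattice of ``good cells'' is already active; then a ``spreading'' stage where I use a geometric argument, powered by Lemma~\ref{lemma:z2th1}, to show that activation sweeps across the whole square in $\O(\sqrt n/r)$ steps. The key reduction is to tile $[0,\sqrt n]^2$ by a grid of square cells of side $r/\sqrt 2$ (so that any two nodes in the same cell, or in orthogonally adjacent cells, are within Euclidean distance $r$ and hence adjacent in $G_{n,r}$). Each cell contains $\po(r^2/2)=\po((a\ln n)/(2\pi))$ points, concentrated around its mean by the Poisson tail bound~(\ref{eq:po1}); call a cell ``occupied'' if it contains at least, say, $(a\ln n)/(4\pi)$ points, which happens whp simultaneously over all $\O(n/r^2)$ cells by a union bound. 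This turns the continuum problem into a bootstrap-percolation problem on the cell grid $\cong \Z^2$.

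Next I would handle the seeding. Within an occupied cell the number of initially active nodes is $\po$-distributed with mean $\geq p(a\ln n)/(4\pi)$; I want to choose the comparison so that the relevant per-cell ``success'' probability $q$ — the probability that a cell already contains $\theta=\gamma a\ln n$ active nodes, \emph{or} will be pushed over threshold once a neighboring cell is fully active — is bounded below by something like $\omega(1/\sqrt{\text{(number of cells along a side)}})=\omega(n^{-1/4}\sqrt{\ln n})$, which is exactly the hypothesis needed to invoke Lemma~\ref{lemma:z2th1} with $N=\Theta(\sqrt n/r)$. Here the two-sided condition $p>p''=\min\{\gamma,\,5\pi\gamma/J_R^{-1}(1/a\gamma)\}$ enters: the branch $p>\gamma$ is the easy regime where, by the Poisson tail bound and the argument already used in the proof of Theorem~\ref{thm:incomplete.percolation}, \emph{every} node has $\geq\theta$ active neighbors whp and the graph is fully active after one step; the branch $p>5\pi\gamma/J_R^{-1}(1/a\gamma)$ is the delicate one, obtained by running the Bahadur--Rao sharp large-deviation estimate (Claim~\ref{cl:br.poiss}) on $\po(pD')$ with the \emph{reduced} mean $D'=D/(5\pi)$ coming from counting only the active nodes guaranteed to lie in a fixed constellation of cells around a given node. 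The constraint $a\geq 5\pi/H(5\pi\gamma)$ is precisely what makes the resulting per-cell failure probability $o(r^2/n)$, so the union bound over cells closes. I would set up the Bahadur--Rao application so that the exponent $pD'\,H(\gamma/(p/(5\pi)))>\tfrac1{5\pi}\cdot a\ln n\cdot(\cdots)$ is controlled, and reading off the threshold on $p$ gives the stated $5\pi\gamma/J_R^{-1}(1/a\gamma)$ just as in Theorem~\ref{thm:incomplete.percolation}.

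With the grid reduction and the seeding in hand, the spreading stage is short: occupied cells that reach threshold act as the initially-active sites of a $\Z^2$ bootstrap process with $\theta=1$ (one fully-active neighbor cell suffices to drag its neighbor over threshold, by the cell-size choice and the Bahadur--Rao lower bound on the count contributed by one neighboring cell), the seeding density is $\omega(1/\sqrt N)$ with $N=\Theta(\sqrt n/r)$, so Lemma~\ref{lemma:z2th1} gives full activation of the cell grid within $\O(N)=\O(\sqrt n/r)$ steps whp; full activation of the cell grid means every node of $G_{n,r}$ is active, and propagation of one ``layer'' of cells costs $\O(1)$ rounds of the true process, so the total number of steps is $\O(\sqrt n/r)$. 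I expect the main obstacle to be the second stage's bookkeeping: making precise the claim that ``one fully active neighboring cell forces a threshold-crossing in the adjacent cell'' uniformly over all cells, since $\theta=\gamma a\ln n$ active nodes must be certified to lie within distance $r$ of \emph{every} node of the adjacent cell, not just of its center — this is where I must be careful that the cell side $r/\sqrt2$ (or a slightly smaller constant multiple of $r$) is small enough, and where the Bahadur--Rao constant $5\pi$ and the feasibility window for $(a,\gamma)$ from Lemma~\ref{lm:ag} are genuinely used rather than merely convenient.
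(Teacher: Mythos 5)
Your architecture is essentially the paper's --- tile $[0,\sqrt n]^2$ into cells of side proportional to $r$, check cell occupancy by the Poisson lower tail, seed one or more cells via Bahadur--Rao, and finish with the $\theta=1$ process on the cell grid --- but three of your quantitative choices are wrong in ways that prevent you from recovering the theorem as stated. First, the cell side must be $r/\sqrt5$, not $r/\sqrt2$: two points in orthogonally adjacent cells of side $s$ can be as far apart as $s\sqrt5$, so with $s=r/\sqrt2$ a fully active cell is \emph{not} adjacent to every node of its neighboring cell and the spreading step collapses. The constant $5\pi$ appearing throughout the statement ($\gamma<1/5\pi$, $a\ge 5\pi/H(5\pi\gamma)$, $p''=5\pi\gamma/J_{R}^{-1}(1/a\gamma)$) is exactly the reciprocal of the cell area $A=r^2/5=a\ln n/(5\pi)$; you flag the cell size as a point to be careful about but leave it unresolved, and nothing in your setup produces $5\pi$ rather than $2\pi$.

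Second, your seeding requirement is too strong to yield the stated $p''$. You demand per-cell success probability $\omega(1/\sqrt N)$ so as to quote Lemma~\ref{lemma:z2th1} verbatim, but for $\theta=1$ on the cell grid a \emph{single} over-threshold cell spreads deterministically to the whole grid in $\Oh(N)$ steps; hence it suffices that $\pr\left(\po(pA)\ge \gamma D+1\right)=\omega(\ln n/n)$, i.e.\ $\omega(1)$ divided by the number of cells. Imposing $\omega(1/\sqrt N)=\omega\bigl((\ln n/n)^{1/4}\bigr)$ instead forces the Bahadur--Rao exponent to satisfy $\tfrac{ap}{5\pi}I(\alpha)<\tfrac14$ rather than $<1$, which gives $p>5\pi\gamma/J_{R}^{-1}\left(1/(4a\gamma)\right)$, strictly larger than the claimed $p''$. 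Third, you have misassigned the hypothesis $a\ge5\pi/H(5\pi\gamma)$: it plays no role in the Bahadur--Rao step, and is instead precisely the condition under which the lower Poisson tail plus a union bound over the $\Theta(n/\ln n)$ cells guarantees that \emph{every} cell contains at least $\theta=\gamma a\ln n$ nodes, which is what lets a fully active cell push each of its neighbors over threshold. Your substitute occupancy threshold of half the mean would instead need $\tfrac{a}{2\pi}H(1/2)>1$, i.e.\ $a\gtrsim 41$, a condition on $a$ alone that the theorem's hypotheses do not imply (they allow $a$ as small as roughly $5\pi$). Your treatment of the easy branch $p>\gamma$ is fine and matches the paper.
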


For $p>\gm$ an initial configuration becomes fully active at the next time step whp (see the proof of Theorem~\ref{thm:incomplete.percolation}), therefore we consider the case $p \leq \gm$.

The proof of Theorem~\ref{thm:complete.percolation} consists of two parts. We tile the square $[0,\sqrt{n}]^2$ into cells $r / \sqrt{5} \times r / \sqrt{5}$ and show that in the initial configuration: (i) When $a>1$, $\gamma \in (0,1/5\pi)$, and $a \geq 5 \pi /H(5 \pi \gamma)$, every cell contains at least $\gm D$ nodes whp; (ii) When $p>p''$ at least one cell contains $\gm D$ or more active nodes. By Lemma~\ref{lemma:z2th1} it follows that for $a,\gm,p$ in the specified ranges $G_{n,r}$ becomes fully active within $\Oh(\sqrt{n}/r)$ steps whp.

\begin{proof}
Tile the square $[0,\sqrt{n}]^2$ into cells $r / \sqrt{5} \times r / \sqrt{5}$, see Figure~\ref{fig:grid}. 
Define the area of a cell $A:=r^2/5=a\ln n/5\pi$.
Call two cells neighboring if they share one side. Notice every pair of nodes within the same cell or within two neighboring cells are adjacent by the choice of the size of a cell. Define $G'_{n,r}$ on the set of nodes of $G_{n,r}$ as follows. The set of edges of $G'_{n,r}$ consists of the subset of edges of $G_{n,r}$ whose terminal nodes belong to the same cell or two neighboring cells. Then the monotonicity of bootstrap percolation yields
\beq
\label{eq:mono}
\pr_p \left(G'_{n,r} \textrm{ becomes fully active} \right) \leq \pr_p \left( G_{n,r} \textrm{ becomes fully active}\right)\,.
\eeq
Therefore it is sufficient to show that whp $G'_{n,r}$ becomes fully active when $p>p''$~(\ref{eq:2nd:bnd}).

\begin{figure}[!h]
\centering
\includegraphics[width=0.70\columnwidth]{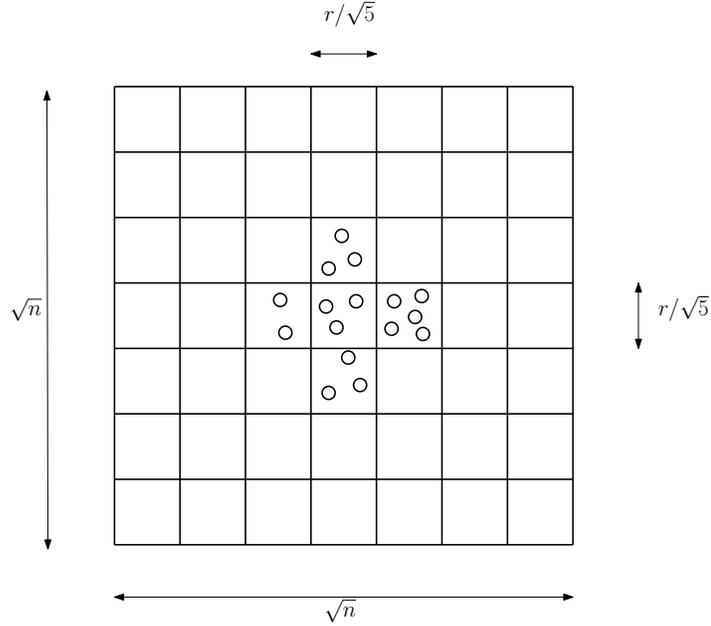}
\caption{Tiling the square $[0,\sqrt{n}]^2$.}
\label{fig:grid}
\end{figure}

Part (i) (To show that every cell contains at least $\gm D$ nodes whp.) We first bound the probability that an arbitrary cell contains at most $\gm D$ nodes. The number of nodes in a cell follows $\po(A)$, i.e., $\po(a\ln n/5 \pi)$. Moreover, the numbers of nodes in cells are independent random variables (given the Poisson point process $\mathcal{X}$). For $\gm \leq 1/5\pi$, from~(\ref{eq:po1}) we obtain  
\beqn
\nonumber
\pr \left( \textrm{a cell contains at most $\gm D$ nodes} \right) &=& \pr \left( \po\left( \frac{a \ln n}{5 \pi} \right) \leq \gm a \ln n \right) \\
\nonumber
&\leq& \exp \left( - \frac{a \ln n}{5 \pi} H (5\pi \gm)\right) \\
\nonumber
&=& n^{-\frac{a}{5\pi}H(5\pi \gm)}\,. 
\eeqn
The total number of cells in $[0,\sqrt{n}]^2$ is $5n/r^2= 5 \pi n/(a \ln n) = \o(n)$. 
The union bound taken over all cells yields
\beq
\label{eq:every.cell.dense.enough}
\pr \left( \textrm{every cell contains at least $\gm D$ nodes} \right) \geq 1 - \o\left(n^{1-\frac{a}{5\pi}H(5\pi \gm)}\right) \,.
\eeq
Finally, for $a \geq 5\pi / H (5 \pi \gm)$, from~(\ref{eq:every.cell.dense.enough}) it follows that every cell contains at least $\gm D$ nodes whp.

Part (ii).
(To show that at least one cell contains $\gm D$ or more active nodes.)
We now derive conditions such that at least one cell contains at least $\theta=\gm D$ active nodes in the initial configuration. In order to guarantee that whp there is at least one cell among $5n/r^2=\Th(n / \ln n)$, which contains at least $\th$ active nodes in the initial configuration, it suffices to find $p$ such that
\beq
\label{eq:omega}
\pr \left( \po(p A) \geq \gm D + 1 \right) = \omega \left( \frac{\ln n}{n} \right) \,,   
\eeq
since 
\beq
\nonumber
\lim_{n \to \infty}1-\left(1-\omega(\ln n /n)\right)^{\Th(n / \ln n)} = 1 \,.
\eeq
Define $\al := 5 \pi \gm/p - 1$, then by rewriting (\ref{eq:omega}) we need $p$ such that
\beq
\label{eq:FBP}
\pr \left( \frac{\po(p A) -p A}{p A} \geq \al + \frac{1}{pA} \right) = \omega \left( \frac{\ln n}{n} \right) \,. 
\eeq
By the Bahadur-Rao tail bound~\cite{bahadur-deviations-1960}, as $n \to \infty$, i.e., $p A = \Th(p\ln n) \to \infty$, the shifted Poisson random variable $\po(p A) -p A$ satisfies
\beq
\nonumber
\pr \left( \frac{\po(p A) -p A}{p A} \geq \al + \frac{1}{pA} \right) \approx \frac{\sqrt{1+\al}}{\al \sqrt{2 \pi}} \frac{1}{\sqrt{p A}} \exp \left(- p A I(\al) \right) \,,
\eeq
where the {\it rate function} is defined by
\beq
\nonumber
\label{eq:rate}
I(\al) = \sup_{s \in \mathbb{R}}\left\{s \al - e^{s} + s + 1\right\} = (1+\al)\ln(1+\al) - \al \,. 
\eeq
(See Appendix for the details.)
Therefore for (\ref{eq:FBP}) to be satisfied we require 
\beq
\label{eq:cond}
\frac{n}{\ln n} \frac{\sqrt{1+\al}}{\al \sqrt{2 \pi}} \frac{1}{\sqrt{p A}} \exp \left(- p A I(\al) \right) = \omega(1) \,.
\eeq
The left hand side of (\ref{eq:cond}) equals 
\beqn
\nonumber
\nonumber
&=&  \exp \left(\left( 1- \frac{ap}{5\pi} I(\al) \right) \ln n - \frac{3}{2}\ln \ln n -\frac{1}{2}\ln\frac{pa}{5\pi} + \ln\frac{\sqrt{1+\al}}{\al \sqrt{2 \pi}} \right) \,,
\eeqn
which is $\omega(1)$ if $1>a p I(\al)/5\pi$. 
Given $\al = 5 \pi \gm/p - 1$, the condition $1>a p I(\al)/5\pi$ is equivalent to $1/a\gm > H(5\pi\gm/p)/(5\pi\gm/p)$, and moreover to
\beq
\label{eq:2nd:bnd}
p > p'':=\frac{5\pi\gm}{J_R^{-1}(1/a\gm)}\,.
\eeq

To complete the proof notice that once any $\gm D$ nodes within a cell become active, all nodes within that cell become active at the next time step as would all nodes within its neighboring cells. This resulting process which jointly activates all nodes within one cell is equivalent to activating a site in $\Z^2$. The resulting BP in $\Z^2$ has the threshold $\th=1$ by construction, see Figure~\ref{fig:grid}. Thus $BP(G'_{n,r},p,\theta)$ becomes fully active when $p>p''$ by Lemma~\ref{lemma:z2th1}. The proof follows from~(\ref{eq:mono}).
\end{proof}

\begin{rem}
For non-trival percolation threshold, that is, $p'' \leq \gamma$, it is necessary
\begin{equation*} 
a \gamma \leq \frac{1}{J_R(5\pi)} \approx 0.55 \,.
\end{equation*} 
\end{rem}

\begin{rem}
When $a>1$, the upper bound on $\gamma$ in Lemma~\ref{lm:ag} is further tightened: 
\begin{equation*}
\gamma \in \left[0,\frac{1}{aJ_R(5\pi)}\right], \quad \textrm{ for }a<5 \pi \,, \\
\end{equation*}
and 
\begin{equation*}
\gamma \in \left[\frac{1}{5\pi} H_{L}^{-1} \left( 5\pi/a \right),\frac{1}{aJ_R(5\pi)}\right], \quad \textrm{ for } a \geq 5 \pi \,.
\end{equation*}
\end{rem}

\subsection{Analysis of Bounds on Critical Thresholds}

The critical threshold $p' = \gm/J_R^{-1}(1/a \gm)$ can be rewritten as 
\beq
\label{eq:leveled_logpc}
\ln p' = -\ln a - \ln \left((1/a\gm)J_R^{-1}(1/a\gm)\right)\,. 
\eeq

The function $-\ln \left(x J_R^{-1}(x) \right)$ is monotonically decreasing in $x$, hence $p'$ is monotonically increasing in $a$ and monotonically decreasing in $\gm$.
As an example we numerically compute and tabulate $p'$ for $\gm=1/20$ and different values of $a$ in Table~1.
In Figure~\ref{fig:pc_vs_a_vs_gm}, $p'$ is plotted as a function of $a$ for different values of 
\beq
\nonumber
\gm \in \{1/70, 1/60, 1/50, 1/40, 1/30, 1/20\} \,.
\eeq

\begin{table}[!ht]
\centering
\begin{tabular}{|c|c|c|}
\hline
$a$ & $p'$ & $p''$\\
\hline
\hline
3 & 0.0000234198 & 0.0003678767\\ 
\hline
4 & 0.0001242460 & 0.0019516511\\ 
\hline
5 & 0.0003391906 & 0.0053279940\\ 
\hline
6 & 0.0006649716 & 0.0104453500\\ 
\hline
7 & 0.0010794693 & 0.0169562642\\ 
\hline
8 & 0.0015576467 & 0.0244674579\\ 
\hline
9 & 0.0020779022 & 0.0326396121\\ 
\hline
10 & 0.0026234549 & 0.0412091329\\ 
\hline
25 & 0.0101188498 & 0.1589465210\\ 
\hline
50 & 0.0174952121 & 0.0174952120\\ 
\hline
100 & 0.0246619916 & 0.3873896589\\ 
\hline
\end{tabular}
\label{table:pc}
\caption{Bounds $p',p''$ on the critical thresholds for different values of $a$ when $\gm=1/20$.}
\end{table}

\begin{figure}[!ht]  
\centering 
\includegraphics[width=0.625\columnwidth]{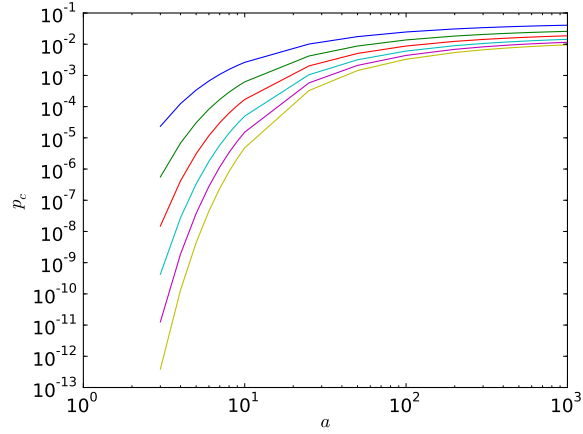}
\caption{The bound $p'$ for $\gm \in \{1/70, 1/60, 1/50, 1/40, 1/30, 1/20\}$ as a function of $a$.}
\label{fig:pc_vs_a_vs_gm}
\end{figure}

The experiments are performed on $G_{n,r}$ with $n=15000$ and $n=25000$ nodes, and $r=\sqrt{a \ln n /\pi}$ for the cases: (i) $a=30$ and $\gamma=1/100$, and (ii) $a=35$ and $\gamma=1/75$. On these instances of graphs, for each chosen value of $p$ in $(0,1)$ we simulate BP $100$ times. More precisely, within each experiment we generate a random initial configuration with the probability $p$ and perform BP with the threshold $\th =\gm D$ where the expected degree $D$ is calculated for a given input $G_{n,r}$. 

Numerical results are presented with the initial probability $p$ on the horizontal axis, and the percentage of fully active stable configurations on the vertical axis. 
Four cases when $(a=30,\gm=1/100)$, $(a=35,\gm=1/75)$, for $n=15000,25000$, are presented in Figures~\ref{fig:experiments-30-100-15000},~\ref{fig:experiments-30-100-25000},~\ref{fig:experiments-35-75-15000} and~\ref{fig:experiments-35-75-25000}, respectively. These charts match the bounds derived theoretically for $p'$ and $p''$. Further, they appear to support the case that $p'_c\neq p''_c$ even though we do not currently have a proof one way or the other.

\begin{figure}[!htb]
\centering
\includegraphics[width=3.5in]{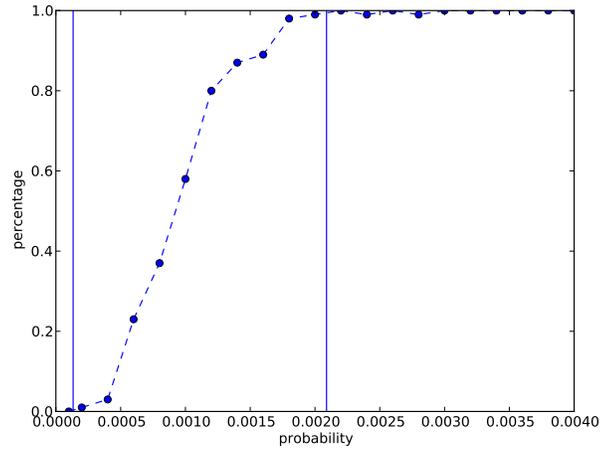}
\caption{Percentage of fully percolated configurations in $100$ simulations of $BP(G_{n,r},p,\th)$ when $a=30, \gamma=1/100$, $n=15000$, $r=\sqrt{30\ln n/\pi n} \approx 0.07824$, $D=30 \ln n \approx 288.47$ and $\th= \lceil 100^{-1}\E(\deg) \rceil = \lceil 2.88 \rceil = 3$. The bounds are $p'=0.000133$ and $p''=0.002089$.}
\label{fig:experiments-30-100-15000}
\end{figure}

\begin{figure}[!htb]
\centering
\includegraphics[width=3.5in]{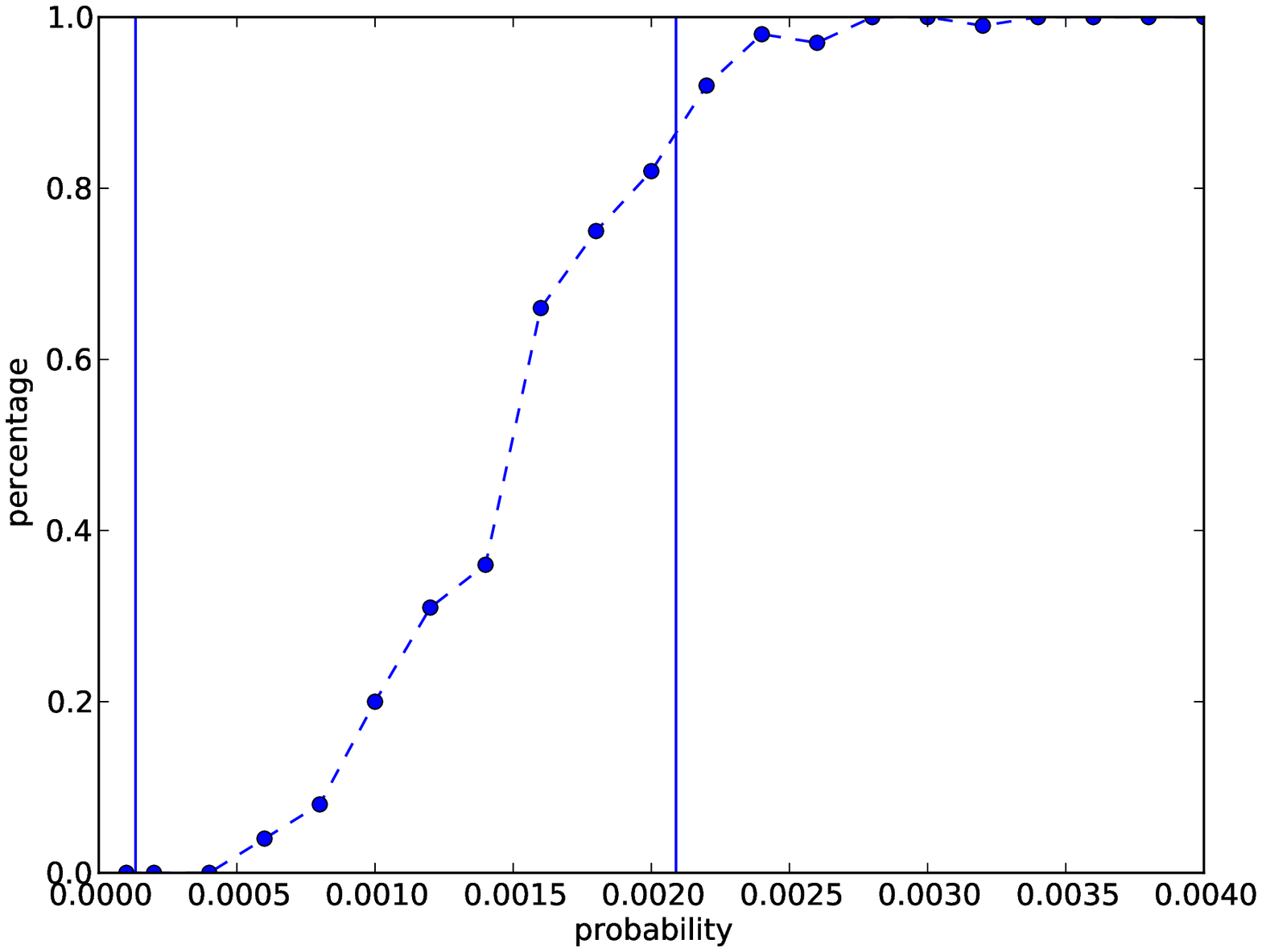}
\caption{Percentage of fully percolated configurations in $100$ simulations of $BP(G_{n,r},p,\th)$ when $a=30, \gamma=1/100$, $n=25000$, $r=\sqrt{30\ln n/\pi n} \approx 0.06219$, $D=30 \ln n \approx 303.80$ and $\th= \lceil 100^{-1}\E(\deg) \rceil = \lceil 3.04 \rceil = 4$. The bounds are $p'=0.000133$ and $p''=0.002089$.}
\label{fig:experiments-30-100-25000}
\end{figure}

\begin{figure}[!htb]
\centering
\includegraphics[width=3.5in]{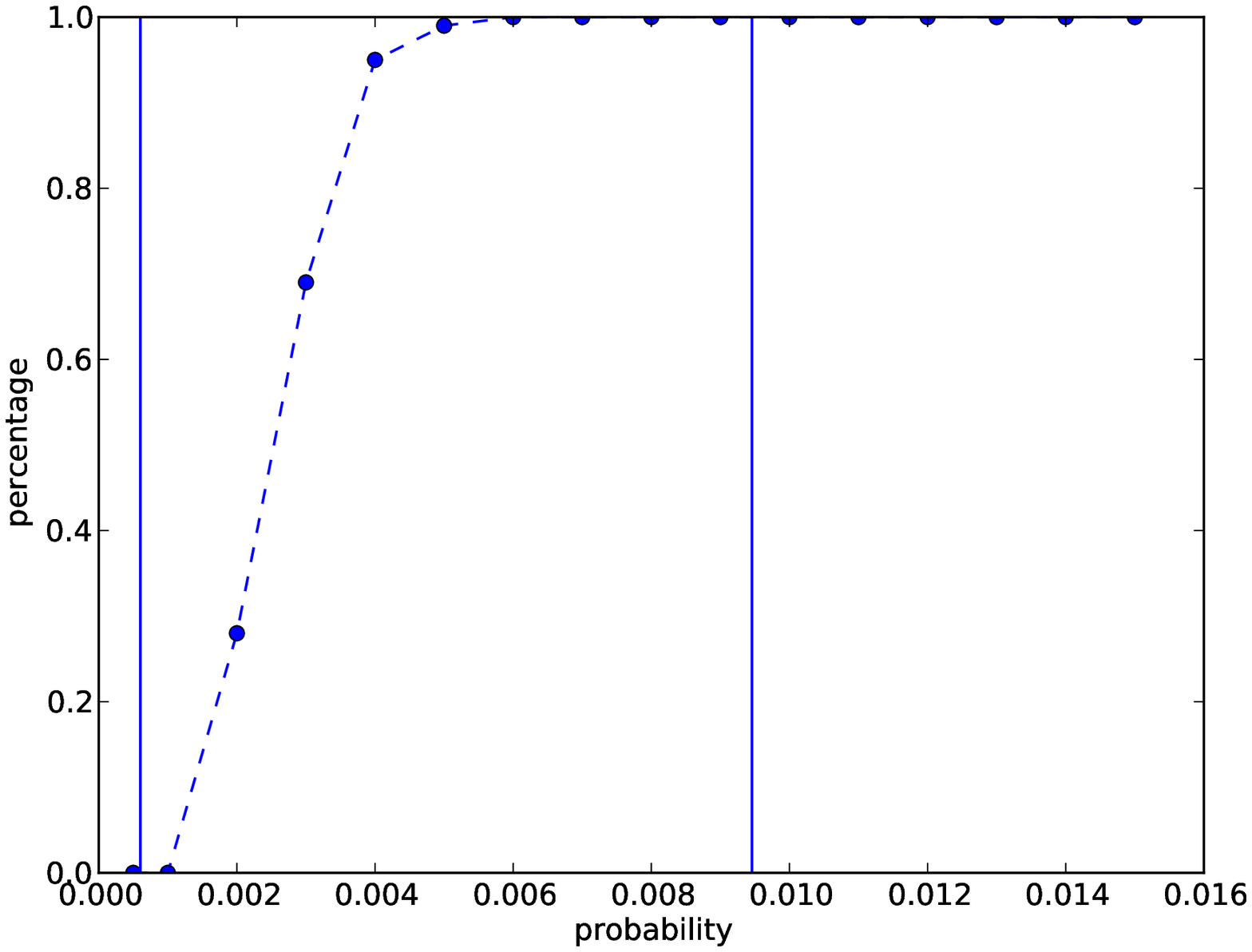}
\caption{Percentage of fully percolated configurations in $100$ simulations of $BP(G_{n,r},p,\th)$ when $a=35, \gamma=1/75$, $n=15000$, $r=\sqrt{35\ln n/\pi n} \approx 0.08451$, $D=35 \ln n \approx 336.55$ and $\th= \lceil 75^{-1}\E(\deg) \rceil = \lceil 4.49 \rceil = 5$. The bounds are $p'=0.000602$ and $p''=0.009457$.}
\label{fig:experiments-35-75-15000}
\end{figure}

\begin{figure}[!htb]
\centering
\includegraphics[width=3.5in]{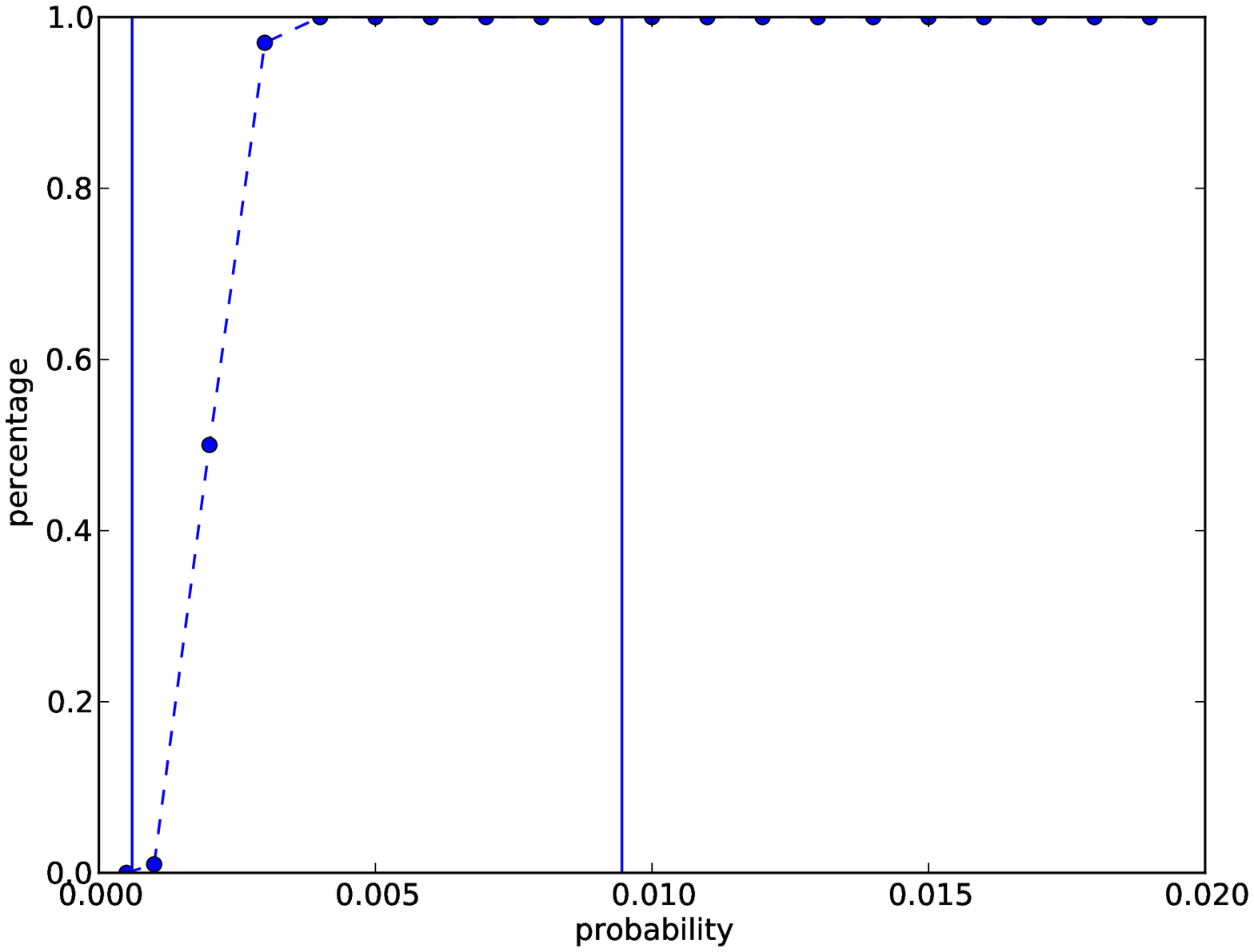}
\caption{Percentage of fully percolated configurations in $100$ simulations of $BP(G_{n,r},p,\th)$ when $a=35, \gamma=1/75$, $n=25000$, $r=\sqrt{35\ln n/\pi n} \approx 0.06718$, $D=35 \ln n \approx 354.43$ and $\th= \lceil 75^{-1}\E(\deg) \rceil = \lceil 4.73 \rceil = 5$. The bounds are $p'=0.000602$ and $p''=0.009457$.}
\label{fig:experiments-35-75-25000}
\end{figure}

\section*{Acknowledgements}
This work was funded by NIST Grant No. 60NANB10D128.

\newpage
\bibliographystyle{acm}
\bibliography{boot}

\section*{Appendix}

\begin{lemma}(Concentration on a Poisson random variable, see~\cite{penrose:book}).
\label{lemma:poisson}
A Poisson random variable $\po(\lm)$ (with $\lm>0$) satisfies: 
\beqn
\label{eq:po1}
\pr\left( \po(\lm) \geq k \right) &\leq& \exp\left( -\lm H(k/\lm)\right), \textrm{ for $k \geq \lm$} \,, \\
\label{eq:po2}
\pr\left( \po(\lm) \leq k \right) &\leq& \exp\left( -\lm H(k/\lm)\right), \textrm{ for $k \leq \lm$} \,, 
\eeqn
where $H(x)=x\ln x -x + 1$ for $x>0$.
\end{lemma}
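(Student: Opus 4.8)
\begin{proof-idea}
The plan is to invoke the standard exponential Chernoff bound --- Markov's inequality applied to the exponential moment generating function of $\po(\lambda)$ --- followed by an optimization over the free parameter.

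Recall that for $X \sim \po(\lambda)$ and any $t \in \R$ one has $\Ex{e^{tX}} = \exp\bigl(\lambda(e^{t}-1)\bigr)$. For the upper tail~(\ref{eq:po1}), fix $t>0$ and apply Markov's inequality to the nonnegative random variable $e^{tX}$:
\[
\pr\left( X \geq k \right) = \pr\left( e^{tX} \geq e^{tk} \right) \leq e^{-tk}\,\Ex{e^{tX}} = \exp\bigl(\lambda(e^{t}-1) - tk\bigr)\,.
\]
The exponent is convex in $t$, with unconstrained minimizer at $e^{t} = k/\lambda$; since $k \geq \lambda$ this critical point is $t = \ln(k/\lambda) \geq 0$, hence an admissible choice in the Chernoff step. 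Substituting it turns the exponent into $k - \lambda - k\ln(k/\lambda) = -\lambda\bigl((k/\lambda)\ln(k/\lambda) - (k/\lambda) + 1\bigr) = -\lambda H(k/\lambda)$, which is~(\ref{eq:po1}).

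For the lower tail~(\ref{eq:po2}) the argument is symmetric: fix $t>0$ and apply Markov's inequality to $e^{-tX}$, so that
\[
\pr\left( X \leq k \right) = \pr\left( e^{-tX} \geq e^{-tk} \right) \leq e^{tk}\,\Ex{e^{-tX}} = \exp\bigl(\lambda(e^{-t}-1) + tk\bigr)\,.
\]
Here the unconstrained minimizer of the exponent is $e^{-t} = k/\lambda$; since $0 < k \leq \lambda$ this is $t = \ln(\lambda/k) \geq 0$, again admissible, and substituting yields exponent $-\lambda H(k/\lambda)$, i.e.~(\ref{eq:po2}). The boundary case $k=0$ is checked directly: $\pr\left( X \leq 0 \right) = e^{-\lambda} = \exp\bigl(-\lambda H(0)\bigr)$, since $H(0)=1$ (read as the limit of $x\ln x - x + 1$ at $0$).

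The only step demanding any care is verifying that the unconstrained optimizer $t^{\star}$ lies in $(0,\infty)$ so that the Chernoff bound remains valid, and this is precisely where the hypotheses $k \geq \lambda$ (respectively $k \leq \lambda$) enter. Everything else is a one-line substitution, so I do not anticipate a genuine obstacle --- this is the textbook Poisson concentration estimate, and the citation to~\cite{penrose:book} is for exactly this computation.
\end{proof-idea}
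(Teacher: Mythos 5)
Your proof is correct: the Chernoff/Markov argument with the exponential moment generating function, optimized at $t=\ln(k/\lambda)$ (resp.\ $t=\ln(\lambda/k)$), is exactly the standard computation behind this bound, and your check that the optimizer is admissible under $k\geq\lambda$ (resp.\ $k\leq\lambda$) plus the $k=0$ boundary case covers all the details. The paper itself gives no proof and simply cites Penrose's book, where the same Chernoff-type derivation is used, so your argument matches the intended one.
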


\noindent
\begin{theorem}(Bahadur-Rao, see~\cite{bucklew-1990-large})
\label{thm:br}
Let $X_1,X_2,\dots$ be an i.i.d. sequence of random variables such that $\E(X_1)=0$ and $M(s) := \E(e^{sX_i}) < \infty$ for all $s \in \R$. 
If $X_1$ is of lattice type and $\pr(X_1=\al)>0$, then
\beq
\nonumber
\lim_{N \to \infty} \pr \left( \frac{1}{N}\sum_{i=1}^N X_i \geq \al \right) \exp \left(N I(\al) \right) \sqrt{N}= \frac{1}{\sigma \sqrt{2 \pi} \left(1-\exp(-s_\al)\right)}\,,
\eeq
where $I(\al) := \sup_{s \in \R} \left(s\al - \ln M(s)\right)$ attained at $s=s_\al$, and $\sigma^2 = M''(s_\al)/M(s_\al)-\al^2$.
\end{theorem}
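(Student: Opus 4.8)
\begin{proof-idea}
The plan is the classical \emph{exponential change of measure} (Cram\'er tilting) followed by a uniform \emph{local central limit theorem} for lattice sums. Write $S_N=\sum_{i=1}^N X_i$ and let $\Lm(s):=\ln M(s)$ be the cumulant generating function; since $M(s)<\infty$ for all $s$ and (because $\pr(X_1=\al)>0$ with $\al\neq\E(X_1)$) $X_1$ is non-degenerate, $\Lm$ is smooth and strictly convex on $\R$. We are in the large-deviation regime $\E(X_1)=\Lm'(0)=0<\al<\operatorname{ess\,sup}X_1$: the strict upper inequality is what makes the tilt finite, while $\pr(X_1=\al)>0$ guarantees $\al$ is a genuine lattice value so that $\pr(S_N\geq N\al)$ carries the relevant lattice structure. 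Hence there is a unique $s_\al\in(0,\infty)$ with $\Lm'(s_\al)=\al$; this $s_\al$ attains the supremum defining $I(\al)$, so $I(\al)=s_\al\al-\Lm(s_\al)$, and $\Lm''(s_\al)=M''(s_\al)/M(s_\al)-\al^2=:\sigma^2>0$.

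First I would introduce the tilted law $\widetilde{\pr}$ under which $X_1,\dots,X_N$ are still i.i.d.\ with single-coordinate density $e^{s_\al x}/M(s_\al)$ with respect to the law of $X_1$; then $\widetilde{\E}(X_1)=\Lm'(s_\al)=\al$ and $\widetilde{\var}(X_1)=\Lm''(s_\al)=\sigma^2$. Since $d\pr/d\widetilde{\pr}=M(s_\al)^N e^{-s_\al S_N}$ on $\sigma(X_1,\dots,X_N)$ and $M(s_\al)^N e^{-s_\al N\al}=e^{-N(s_\al\al-\Lm(s_\al))}=e^{-NI(\al)}$, the change of measure yields
\[
\pr\!\left(\tfrac1N S_N\geq\al\right)=e^{-NI(\al)}\,\widetilde{\E}\!\left[e^{-s_\al(S_N-N\al)}\,\mathbf{1}\{S_N\geq N\al\}\right],
\]
so it remains to show that $\sqrt N$ times the residual expectation converges to $\bigl(\sigma\sqrt{2\pi}\,(1-e^{-s_\al})\bigr)^{-1}$.

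Next I would expand the residual expectation over the lattice. Taking the span of $X_1$ equal to $1$ (the case relevant to the Poisson application; a general span only rescales the constants), $S_N-N\al$ is $\Z$-valued under $\widetilde{\pr}$ and
\[
\widetilde{\E}\!\left[e^{-s_\al(S_N-N\al)}\mathbf{1}\{S_N\geq N\al\}\right]=\sum_{k\geq0}e^{-s_\al k}\,\widetilde{\pr}(S_N-N\al=k).
\]
By Gnedenko's local limit theorem for aperiodic lattice distributions, $\sqrt N\,\widetilde{\pr}(S_N-N\al=k)=\tfrac{1}{\sigma\sqrt{2\pi}}e^{-k^2/(2N\sigma^2)}+\varepsilon_N(k)$ with $\sup_{k\in\Z}|\varepsilon_N(k)|\to0$. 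Because $s_\al>0$ the weights $e^{-s_\al k}$ form a convergent geometric series, so the uniform error contributes $O(\sup_k|\varepsilon_N(k)|)\to0$ and, by dominated convergence, $\sqrt N\sum_{k\geq0}e^{-s_\al k}\widetilde{\pr}(S_N-N\al=k)\to\tfrac{1}{\sigma\sqrt{2\pi}}\sum_{k\geq0}e^{-s_\al k}=\tfrac{1}{\sigma\sqrt{2\pi}\,(1-e^{-s_\al})}$. Combined with the displayed identity this proves the theorem.

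I expect the technical heart --- and the main obstacle in a self-contained argument --- to be the \emph{uniform} local limit theorem: one needs the error $\sqrt N\,\widetilde{\pr}(S_N-N\al=k)-\tfrac{1}{\sigma\sqrt{2\pi}}e^{-k^2/(2N\sigma^2)}$ to be $o(1)$ uniformly in $k\in\Z$ so that it may be summed against $e^{-s_\al k}$, and this rests on the tilted characteristic function satisfying $\sup_{|t|\in[\delta,\pi]}\bigl|\widetilde{\E}(e^{itX_1})\bigr|<1$ for every $\delta>0$, i.e.\ on the span being exactly $1$ (aperiodicity). A secondary point is checking that the tilting setup is legitimate: $s_\al$ finite and positive and $\sigma^2>0$ follow from $X_1$ non-degenerate together with $\E(X_1)<\al<\operatorname{ess\,sup}X_1$, and the boundary case $\al=\operatorname{ess\,sup}X_1$ (where $s_\al=\infty$) must be excluded --- equivalently, one states the result for $\al\in(\E(X_1),\operatorname{ess\,sup}X_1)$.
\end{proof-idea}
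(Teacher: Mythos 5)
The paper does not prove this statement: it is quoted verbatim from the cited large-deviations text of Bucklew and used as a black box (only Claim~\ref{cl:br.poiss}, its specialization to the Poisson case, is derived in the Appendix). So there is no in-paper proof to compare against; measured against the standard proof in the cited source, your sketch follows essentially the same route --- exponential tilting to the measure with mean $\al$, the exact identity $\pr(S_N\geq N\al)=e^{-NI(\al)}\widetilde{\E}[e^{-s_\al(S_N-N\al)}\mathbf{1}\{S_N\geq N\al\}]$, and then a uniform Gnedenko local limit theorem summed against the geometric weights $e^{-s_\al k}$ --- and the argument is correct as outlined. Your two caveats are also the right ones to flag: the statement implicitly takes the lattice span to be $1$ (otherwise the constant acquires the span, which is harmless for the Poisson application), and it implicitly requires $\al$ to lie strictly between $\E(X_1)$ and the essential supremum so that $0<s_\al<\infty$ and $\sigma^2>0$; the hypothesis $\pr(X_1=\al)>0$ is what places $\al$ on the lattice so the sum starts exactly at $k=0$ and produces the factor $\bigl(1-e^{-s_\al}\bigr)^{-1}$.
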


\begin{claim}
\label{cl:br.poiss}
A Poisson random variable $\po(N)$ for $N \to \infty$ satisfies 
\beq
\nonumber
\lim_{N \to \infty} \pr \left( \frac{\po(N)-N}{N} \geq \al \right) \exp \left(N I(\al) \right) \sqrt{N}= \frac{\sqrt{1+\al}}{\al \sqrt{2\pi}}\,,
\eeq
where $I(\al) = (1+\al)\ln(1+\al) - \al$ for $\al \geq 0$. 
\end{claim}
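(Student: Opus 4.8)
\medskip
\noindent\textbf{Proof proposal.}
The plan is to realize $\po(N)$ as a sum of $N$ i.i.d.\ unit-Poisson variables and apply Theorem~\ref{thm:br} directly, then simplify the resulting constants. For integer $N$ write $\po(N) \stackrel{d}{=} \sum_{i=1}^{N} Y_i$ with $Y_i \sim \po(1)$ independent, and set $X_i := Y_i - 1$, so that $\E(X_1)=0$ and $\frac{1}{N}\sum_{i=1}^N X_i = (\po(N)-N)/N$. Its moment generating function is
\[
M(s) = \E(e^{sX_1}) = e^{-s}\,\E(e^{sY_1}) = \exp(e^{s}-1-s),
\]
which is finite for all $s\in\R$, and $X_1$ is of lattice type with span $1$ and support $\{-1,0,1,\dots\}$; hence Theorem~\ref{thm:br} applies.

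The next step is to evaluate the three quantities in Theorem~\ref{thm:br}. The rate function is $I(\al) = \sup_{s}\{s\al - \ln M(s)\} = \sup_{s}\{s(1+\al) - e^{s} + 1\}$; setting the derivative to zero gives the maximizer $s_\al = \ln(1+\al)$, whence $I(\al) = (1+\al)\ln(1+\al) - \al$, as claimed. For the variance term, writing $M = e^{g}$ with $g(s) = e^{s}-1-s$ gives $M''/M = g'' + (g')^2 = e^{s} + (e^{s}-1)^2$, so at $s=s_\al$ (where $e^{s_\al}=1+\al$) we get $\sigma^2 = M''(s_\al)/M(s_\al) - \al^2 = (1+\al) + \al^2 - \al^2 = 1+\al$. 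Finally $1 - e^{-s_\al} = 1 - (1+\al)^{-1} = \al/(1+\al)$, so the Bahadur--Rao prefactor is
\[
\frac{1}{\sigma\sqrt{2\pi}\,(1-e^{-s_\al})} = \frac{1+\al}{\sqrt{1+\al}\,\sqrt{2\pi}\,\al} = \frac{\sqrt{1+\al}}{\al\sqrt{2\pi}},
\]
which is exactly the claimed limit.

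The one delicate point, and the main (minor) obstacle, is the arithmetic behind the lattice hypothesis of Theorem~\ref{thm:br}: it is stated for $\al$ an atom of $X_1$, i.e.\ a nonnegative integer, whereas in the application $\al = 5\pi\gm/p-1$ and the shifted threshold $\al + 1/(pA)$ in~(\ref{eq:FBP}) are arbitrary reals. I would handle this by replacing $\al$ with $\al_N := \lceil N\al\rceil/N$, applying the local limit theorem to the exponentially tilted random walk to get $\pr(\sum_{i=1}^N X_i = m) \sim (\sigma\sqrt{2\pi N})^{-1} e^{-N I(m/N)}$ uniformly for $m$ near $N\al_N$, and summing the geometric series $\sum_{j\ge 0} e^{-s_\al j} = (1-e^{-s_\al})^{-1}$ to recover the tail estimate; the discrepancy $N\al_N - N\al \in [0,1)$ perturbs the exponent only by an $O(1)$ additive constant, which is immaterial for the way the Claim is used in Theorem~\ref{thm:complete.percolation}, where only the exponential order $\exp(-(1+o(1))\,pA\,I(\al))$ is compared against $\ln n/n$. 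For non-integer $N$ one first replaces $N$ by $\lfloor N\rfloor$, absorbing the independent $\po(N-\lfloor N\rfloor)$ remainder into the same $O(1)$ slack.
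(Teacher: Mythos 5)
Your proposal is correct and takes the same route as the paper: realize $\po(N)$ as a centered sum of i.i.d.\ $\po(1)-1$ variables and apply Theorem~\ref{thm:br} with $M(s)=\exp(e^{s}-s-1)$, $s_\al=\ln(1+\al)$, $I(\al)=(1+\al)\ln(1+\al)-\al$, and $\sigma^2=1+\al$. You go slightly further than the paper's one-line proof in two useful ways---you explicitly simplify the prefactor $1/(\sigma\sqrt{2\pi}(1-e^{-s_\al}))$ to $\sqrt{1+\al}/(\al\sqrt{2\pi})$, and you flag (and sketch a rounding fix for) the hypothesis $\pr(X_1=\al)>0$, which strictly speaking restricts $\al$ to lattice points and which the paper applies without comment---but the underlying argument is identical.
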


\begin{proof}
Let $X_i \sim \po(1)-1$ for $i=1,2,\dots$ be the independent lattice type random variables. We have $\E(X_i)=0$ and $\textrm{Var}(X_i)=1$. Consider: (i) the moment generating function $M(s) := \E(e^{sX_i}) = \exp(e^{s}-s-1)$,   
(ii) the rate function $I(\al) = \sup_{s \in \R} \left(s\al - \ln M(z)\right) = (1+\al)\ln(1+\al) - \al$
which is attained at $s_\al=\ln(1+\al)$, and (iii) the variance $\sigma^2 := M''(s_\al)/M(s_\al) - \al^2=1+\al$.
Now the claim follows from Theorem~\ref{thm:br}.
\end{proof}

\end{document}